\documentclass[12pt, a4paper, oneside]{amsart}
\usepackage[a4paper, total={6in, 10in}]{geometry}
\usepackage[T1]{fontenc}
\usepackage[utf8]{inputenc}

\usepackage{todonotes}

\usepackage{amssymb}   
\usepackage{amsthm}      
\usepackage{thmtools}     
\usepackage{mathtools}   
\usepackage{mathrsfs}     
\usepackage{physics}       

\usepackage[backend=biber,style=numeric-comp, sorting=nyt, maxnames=99]{biblatex}
\usepackage[hidelinks, hypertexnames=false]{hyperref}
\usepackage[nameinlink, noabbrev, capitalize]{cleveref}
\usepackage{bookmark}

\declaretheorem[style = plain, numberwithin=section]{theorem}
\declaretheorem[style = plain,      sibling = theorem]{corollary}
\declaretheorem[style = plain,      sibling = theorem]{lemma}
\declaretheorem[style = plain,      sibling = theorem]{proposition}
\declaretheorem[style = definition, sibling = theorem]{definition}
\declaretheorem[style = definition, sibling = theorem, qed=$\spadesuit$]{example}
\declaretheorem[style = remark,    numbered = no]{remark}

\declaretheorem[style = remark,    numbered = no]{editorial comment}

\usepackage[scaled]{beramono}  
\usepackage{listings}
\usepackage{xcolor}

\definecolor{codegreen}{rgb}{0,0.6,0}
\definecolor{codegray}{rgb}{0.5,0.5,0.5}
\definecolor{backcolour}{rgb}{0.95,0.95,0.92}
\definecolor{codegreen}{rgb}{0,  0.6,  0.35}

\lstdefinestyle{mystyle}{
	backgroundcolor=\color{backcolour},   
	commentstyle=\color{codegreen},
	keywordstyle=\color{blue},
	numberstyle=\tiny\color{codegray},
	stringstyle=\color{codegreen},
	basicstyle=\ttfamily\footnotesize,
	breakatwhitespace=false,         
	breaklines=true,                 
	captionpos=b,                    
	keepspaces=true,                 
	numbers=left,                    
	numbersep=5pt,                  
	showspaces=false,                
	showstringspaces=false,
	showtabs=false,                  
	tabsize=2
}
\usepackage[shortlabels]{enumitem}
\setenumerate{label=(\roman{enumi})}

\newcommand{\N}{\mathbb{N}}   
\newcommand{\Z}{\mathbb{Z}}   
\newcommand{\Q}{\mathcal{Q}}   
\newcommand{\R}{\mathbb{R}}   
\newcommand{\C}{\mathbb{C}}   
\newcommand{\bbA}{\mathbb{A}}   
\newcommand{\A}{\mathscr{A}}
\newcommand{\B}{\mathscr{B}}

\renewcommand{\P}{\mathcal{P}} 

\newcommand{\U}{\mathcal{U}}
\newcommand{\I}{\mathscr{I}}
\newcommand{\J}{\mathscr{J}}
\newcommand{\K}{\mathscr{K}}
\newcommand{\T}{\mathbb{T}}
\newcommand{\sC}{\mathscr{C}}

\newcommand{\gbd}{\propto_\Gamma}

\usepackage{graphicx}                                                         

\usepackage[all]{xy}

\title{On AF- and type I-ideals in certain crossed product C*-algebras}

\author{Alexander G. Ravnanger}
\address{ \noindent \newline 
	Department of Mathematical Sciences \newline 
	University of Copenhagen \newline
	Universitetsparken 5 \newline 
	2100 Copenhagen}
\email[]{agr@math.ku.dk}

\begin{document}
	\begin{abstract}
		We study locally finite-dimensional ideals in crossed products of totally disconnected spaces by free actions of the integers and in uniform Roe algebras of exact discrete groups. In the first case, we present a dynamical description of the largest locally finite-dimensional ideal, which turns out to coincide with the intersection of all maximal ideals. In the latter case, we provide a coarse geometric characterization of the locally finite-dimensional compact ideals. Moreover, we show that for crossed products of totally disconnected spaces by free actions of exact groups, the largest type I-ideal is locally finite-dimensional. In the case of uniform Roe algebras, we provide coarse geometric conditions for compact ideals guaranteeing that the ideal is type I and admits an embedding of a UHF-algebra, respectively.
	\end{abstract}
	
	\maketitle

	\section{Introduction}
	
	 Since its first systematic study by Zeller-Meier in \cite{zeller}, the crossed product construction has played an instrumental role as a source of examples of interesting C*-algebras. Hence, it is of keen interest to understand how C*-algebraic regularity properties of the crossed product arise in terms of the underlying dynamics. Significant effort has been spent towards the case of minimal actions in this line of research. This is in part because minimal systems are particularly well-behaved, but also because minimality, under the additional hypothesis of topological freeness, gives rise to simple C*-algebras, which make up the building blocks of the theory and have been the focus of the classification program for C*-algebras. Early examples of this include Putnam's work on the structure of crossed products associated to Cantor minimal systems in \cite{putnam} and \cite{putnam_AF}, and the studies of the irrational rotation algebras, e.g., \cite{rieffel_irr,blackadar_irr,pv_irr,elliott_irr}. However, these questions are also of huge independent interest in the non-simple setting, which gives rise to new and interesting phenomena, while at the same time presenting new challenges, and indeed regularity properties for non-minimal dynamics are gaining more attention at the moment. For example, Cantor systems with finitely many minimal components were systematically treated in \cite{bezuglyi}, and non-minimal free actions on the circle were studied in the recent preprint \cite{bell}. 
	 
	 While uniform Roe algebras were originally introduced by Roe in his work on index theory, they have since become popular to study as interesting examples of C*-algebras, that also provide an interesting bridge between coarse geometry and operator algebras. Unlike what is arguably the most popular set-up for studying regularity properties of crossed products, uniform Roe algebras are non-separable and far from simple. Much of the literature on uniform Roe algebras analyzes them from the perspective of coarse geometry. However, in the case of groups, much can be gained from studying them as crossed products, see for example \cite{kellerhals}. The recent extensive preprint \cite{braga2026} takes this further and studies the large scale (irreversible) dynamics of metric spaces. Until recently, the rigidity question for uniform Roe algebras was arguably the most important problem in the subject. At the moment, one of the central issues is to compute their nuclear dimension, see \cite[Problem LXXXVII]{99prob}. The case of nuclear dimension zero, i.e., local finite-dimensionality, was settled by Li and Willett in \cite{li2018low}, where they show that a uniform Roe algebra of uniformly locally finite space is locally finite-dimensional if and only if the underlying space has asymptotic dimension zero in the sense of Gromov. 
	 
	 A highly relevant line of inquiry that has enjoyed significant interest for a long time, is to understand the ideal structure of non-simple crossed products. Many results are known in this direction, catering to different specific situations, but in general it is now well-established that exactness combined with some freeness condition on the action is the appropriate setting in which to expect the \textit{ideal separation property}, i.e., that every ideal in the crossed product comes from an invariant ideal in the C*-algebra that is being acted on. Many interesting regularity properties have the feature that every C*-algebra admits a largest (possibly trivial) ideal with that given property. Examples include local finite-dimensionality, postliminality and nuclearity. This raises an interesting question for a crossed product enjoying the ideal separation property: Given a regularity property \(\P\), what invariant ideal of the C*-algebra induces the largest ideal in the crossed product with property \(\P\)? In this paper, we address this question for local finite-dimensionality and postliminality, focusing on crossed products of totally disconnected spaces by the integers and uniform Roe algebras of exact discrete groups. In the first case, we provide a dynamical description of the largest locally finite-dimensional ideal, building on the works of Poon and Putnam in \cite{poon} and \cite{putnam}. We show that the largest locally finite-dimensional ideal is AF and that it coincides with the intersection of all maximal ideals. Motivated by the relationship between isolated points and finite-dimensional representations of projections, we also consider the largest type I-ideal, which is shown to be AF. For uniform Roe algebras, we extend the main theorem of \cite{li2018low} by providing a coarse geometric description of the locally finite-dimensional compact ideals. This is succeeded by a discussion of when these compact ideals are type I, providing necessary coarse geometric conditions for the ideal to be type I and admit an embedding of a UHF-algebra, respectively. Although we did not succeed in giving a complete characterization of postliminality, we conclude by showing that in the uniform Roe algebra of the integers, the largest ideal of type I is strictly contained in the largest AF-ideal. 

	This paper has an elliptic composition in that it revolves around two focal examples, namely crossed products of totally disconnected spaces by free actions of the integers and uniform Roe algebras of exact discrete groups, in that order. The two parts are unified by the uniform Roe algebra of the integers as a motivating example. 
	
	\subsection*{Acknowledgements} I am grateful to my supervisor Mikael Rørdam for invaluable discussions and suggestions during the work presented here, as well as for reading early drafts of this paper. Some of this work was also done while I was visiting the Mathematical Institute at the University of Oxford during the spring of 2026. I am grateful to the institute for its hospitality and to Mehrdad Kalantar and Stuart White for hosting me. Thanks to Stuart White for several helpful discussions about the topics of this paper while I was there. The set presented in \cref{ex_typeI_notAF} was constructed during an interaction with Google's Gemini 3.6 Flash. Anthropic's Claude Opus 5 has been used to find typos. 
	
	\section{Preliminaries} 
	
	\subsection{AF- and LF-algebras}
	\label{subsec_AF}
	
	A C*-algebra is called \textit{AF} \textit{(approximately finite- \\dimensional)} if it is the direct limit of a (not necessarily sequential) net of finite-dimensional C*-algebras, indexed over a directed set. By a directed set, we mean a preordered set where every pair of elements has a common upper bound. Equivalently, a C*-algebra is AF if it admits an increasing net of finite-dimensional C*-subalgebras with dense union. A C*-algebra \(\A\) is called \textit{LF (locally finite-dimensional)} if for every finite set of elements \(a_1, \ldots, a_n\) in \(\A\) and \(\varepsilon > 0\), there is a finite-dimensional C*-subalgebra \(\B \subseteq \A\) containing elements \(b_1, \ldots,  b_n \in \B\) such that \(\norm{a_k-b_k} < \varepsilon\) for \(k = 1, \ldots, n\). Clearly, an AF-algebra is LF, and in his foundational work \cite {bratteli_inductive} on the subject, Bratteli showed that for separable C*-algebras the converse is also true. Farah and Katsura gave examples of LF-algebras that are not AF in \cite{farah}. It is well-known that sequential direct limits of separable AF-algebras are AF and that extensions of separable AF-algebras remain AF. However, neither statements hold in general, see \cite{farah} and \cite{bice2019c}. We record the following for completeness. 
	
	\begin{proposition}
		\label{prop_LF}
		Every C*-algebra admits a largest LF ideal. 
	\end{proposition}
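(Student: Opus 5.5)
The plan is to show that the family of LF ideals of a C*-algebra \(\A\) is closed under sums of two ideals and under closures of increasing unions. The closed linear span \(\I\) of all LF ideals is then the increasing union of the finite sums, so it is LF and contains every LF ideal. The zero ideal is LF, so the family is nonempty.

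\textbf{Unions.} Let \((\I_\lambda)\) be an upward directed family of LF ideals with closed union \(\I\). Given \(a_1,\dots,a_n\in\I\) and \(\varepsilon>0\), choose a single \(\lambda\) and \(a_k'\in\I_\lambda\) with \(\norm{a_k-a_k'}<\varepsilon/2\); this uses directedness. Applying the LF property of \(\I_\lambda\) to the \(a_k'\) with tolerance \(\varepsilon/2\) gives a finite-dimensional \(\B\subseteq\I_\lambda\subseteq\I\) with the required approximants. So LF passes to closures of directed unions.

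\textbf{Sums.} Let \(\I,\J\) be LF ideals. The sum \(\I+\J\) is closed, and \((\I+\J)/\I\cong\J/(\I\cap\J)\). A quotient of an LF algebra is LF, since images of finite-dimensional subalgebras are finite-dimensional. So it suffices to prove that an extension \(0\to\I\to\A'\to\A'/\I\to0\) with \(\I\) and \(\A'/\I\) both LF has \(\A'\) LF.

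This extension step is the main obstacle. The standard separable argument uses lifting of finite-dimensional subalgebras, together with an approximate unit of \(\I\) that is quasicentral and consists of projections in finite-dimensional subalgebras. I would proceed as follows.

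\begin{enumerate}
\item Given \(x_1,\dots,x_n\in\A'\), approximate their images by elements of a finite-dimensional \(\mathscr{D}\subseteq\A'/\I\).
\item Lift \(\mathscr{D}\) to a finite-dimensional subalgebra \(\mathscr{F}\subseteq\A'\). Matrix units lift approximately by semiprojectivity of finite-dimensional C*-algebras, and are then perturbed to exact matrix units; this is where care is needed.
\item The LF ideal \(\I\) has an approximate unit of projections, since it has real rank zero.
\item Replace each \(x_k\) by its lift \(f_k\in\mathscr{F}\) plus an error \(y_k\in\I\). Pick a projection \(p\) in a finite-dimensional subalgebra of \(\I\) that almost commutes with \(\mathscr{F}\) and almost acts as a unit on the \(y_k\). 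One obtains such a \(p\) by averaging over the unitary group of \(\mathscr{F}\) and perturbing back to a projection.
\item Cut down: \(p\mathscr{F}p\) is close to a finite-dimensional algebra inside \(\I\), and \((1-p)\mathscr{F}(1-p)\) is close to a copy of a quotient of \(\mathscr{F}\).
\item Combine \(p\I p\)-approximations of the \(py_kp\) with \((1-p)\mathscr{F}(1-p)\). Use the fact that close finite-dimensional subalgebras are unitarily conjugate to merge them into one finite-dimensional algebra \(\B\) that approximately contains all the \(x_k\).
\end{enumerate}

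Only finitely many elements appear at each stage, so separability is never needed: one may first pass to a separable subalgebra. Concretely, I would take a separable C*-subalgebra \(\A_0\subseteq\A'\) containing the \(x_k\) such that \(\A_0\cap\I\) and \(\A_0/(\A_0\cap\I)\) are LF, hence AF by Bratteli. Such an \(\A_0\) is built by a countable closing-off argument that repeatedly adds the finite-dimensional approximating subalgebras. The known result that extensions of separable AF-algebras are AF then gives that \(\A_0\) is AF, so the \(x_k\) are approximated by a finite-dimensional subalgebra of \(\A_0\subseteq\A'\). This closing-off route is what I would actually write, since it avoids the perturbation estimates.
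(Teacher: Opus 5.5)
Your proposal is correct and follows the paper's route: \(\I+\J\) is an extension of the LF algebras \(\I\) and \((\I+\J)/\I\cong\J/(\I\cap\J)\), extensions of LF algebras are LF, and so the LF ideals form a directed family whose closed union is the largest LF ideal (it is LF because LF passes to closures of directed unions). The only difference is that you prove the non-separable extension step yourself, by a separable closing-off argument combined with Bratteli's theorem and Brown's theorem on separable AF extensions, whereas the paper delegates exactly this reduction to a citation. The one detail worth recording in your argument is that \(\A_0\cap\I\) is the closure of the union of the intersections of \(\I\) with the stages of the construction; this holds because \(\operatorname{dist}(b,\I)=\operatorname{dist}(b,\mathscr{C}\cap\I)\) for \(b\) in any C*-subalgebra \(\mathscr{C}\).
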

	
	\begin{proof}
		It follows from the analogous fact in the separable setting that extensions of LF-algebras remain LF, see \cite[Theorem 1.13]{bice2019c}. If \(\I\) and \(\J\) are two LF-ideals in a C*-algebra \(\A\), we claim that \(\I+\J\) is LF. Indeed, we have \((\I+\J)/\J \cong \I/\I \cap \J\) is LF, and thus \(\I+\J\) is an extension of two LF-algebras, hence LF by the comment above. Thus, the family of LF ideals of \(\A\) is directed by inclusion, and the closed union of all LF ideals is the largest LF ideal in \(\A\). 
	\end{proof}

	\subsection{The Poon-Putnam constructions}
	
	Given an action of a group \(\Gamma\) on a C*-algebra \(\A\), we denote by \(\A \rtimes_r \Gamma\) the associated reduced crossed product, and we refer to elements of \(\A\) viewed inside \(\A \rtimes_r \Gamma\) as \textit{diagonal}. A C*-algebra is called an A\(\T\)-algebra if it is the direct limit of a net of C*-algebras that are direct sums of matrix algebras over \(C(\T)\). In \cite{putnam}, Putnam showed that crossed products of Cantor minimal systems are A\(\T\)-algebras. This is important for us because it applies to the simple quotients of our crossed products of interest. By standard approximation techniques, the result may be applied in the non-separable setting. 
	
	Throughout this paper, we shall by a (topological) space always mean a Hausdorff space. 
	
	\begin{lemma}
		\label{lemma_approx}
		Suppose \(\Gamma\) is a discrete countable group acting minimally (resp. freely) on a totally disconnected space \(X\). Given any \(a_1, \ldots, a_n \in C(X) \rtimes_r \Gamma\), there is a separable invariant C*-subalgebra \(\A \subseteq C(X)\) such that \(\widehat{\A}\) is totally disconnected, the action of \(\Gamma\) on \(\widehat{\A}\) is minimal (resp. free )and \(a_1, \ldots, a_n \in \A \rtimes_r \Gamma\). 
	\end{lemma}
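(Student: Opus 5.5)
We build \(\A\) by a countable closure argument, adding generators until all three properties hold. Since \(X\) is totally disconnected, and (implicitly) compact since we work with \(C(X)\), \(C(X)\) is the closed span of projections \(1_U\) for clopen \(U\). The elements \(\sum_{g \in F} f_g u_g\) with \(F\) finite and \(f_g\) in the \(*\)-algebra generated by these projections are dense in \(C(X)\rtimes_r\Gamma\). So each \(a_k\) is a norm limit of such sums, and only countably many clopen sets occur in them. Let \(\mathcal{C}_0\) be this countable family together with \(X\). Let \(\A_0\) be the C*-algebra generated by \(\{1_{gU} : U \in \mathcal{C}_0, g\in\Gamma\}\). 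It is separable and invariant since \(\Gamma\) is countable. It is generated by commuting projections, so its spectrum is totally disconnected: it is a closed subspace of \(\{0,1\}^{\N}\). Moreover \(a_k \in \A_0 \rtimes_r \Gamma\). Here we use that for a unital invariant subalgebra, \(\A_0\rtimes_r\Gamma\) sits isometrically inside \(C(X)\rtimes_r\Gamma\), which holds for reduced crossed products because the conditional expectation restricts. The same inclusion stays valid for every larger invariant subalgebra we construct below.

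\textbf{Free case.} We enlarge inductively. Freeness of the action on \(\widehat{\A}\) means that for each \(g\neq e\) and each character \(\chi\), \(g\chi\neq\chi\). In the totally disconnected setting this follows if \(\widehat{\A}\) is covered by clopens \(V\) with \(gV\cap V=\emptyset\). For each \(g\ne e\), freeness on the compact space \(X\) gives a finite clopen partition \(\{V_{g,1},\dots,V_{g,m_g}\}\) of \(X\) with \(gV_{g,i}\cap V_{g,i}=\emptyset\). To get it, take for each \(x\) a clopen neighbourhood separated from its \(g\)-translate, use compactness, then refine to a partition. Adding all \(1_{V_{g,i}}\) (countably many) and their translates to the generators gives a separable invariant \(\A\). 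If \(\chi\) is a character of \(\A\), then \(\chi(1_{V_{g,i}})=1\) for exactly one \(i\). Then \(g\chi(1_{V_{g,i}})=\chi(1_{g^{-1}V_{g,i}})=0\), since \(g^{-1}V\cap V=\emptyset\) forces \(\chi\) to vanish on one of them. Hence \(g\chi\ne\chi\). No iteration is needed, since this partition property persists under enlargement.

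\textbf{Minimal case.} Here enlarging can destroy minimality, so we iterate. Minimality of \(\Gamma\curvearrowright\widehat{\A}\) for \(\A\) generated by projections is equivalent to the following: for every nonzero projection \(p\in\A\) there are finitely many \(g_1,\dots,g_r\) with \(\sum_j \alpha_{g_j}(p)\ge 1\). Equivalently, each nonempty clopen of the spectrum has finitely many translates covering it. In \(C(X)\) this holds for every nonempty clopen \(U\subseteq X\) by minimality and compactness. Starting from a countable \(\Gamma\)-invariant Boolean algebra \(\mathcal{B}_0\) of clopens of \(X\), every nonempty \(U\in\mathcal B_0\) is already covered by finitely many translates \(g_jU\) in \(X\), and these translates lie in \(\mathcal B_0\). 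Thus the condition holds in the algebra generated by \(\mathcal B_0\) with no further enlargement. Nonzero projections of \(\A=C^*(\mathcal B_0)\) are exactly the \(1_U\) with \(U\in\mathcal B_0\) nonempty, and an inequality \(\sum_j 1_{g_jU}\ge 1\) that holds in \(C(X)\) also holds in \(\A\). So \(\widehat{\A}\) is minimal.

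\textbf{Combined construction and main obstacle.} In both cases we therefore take \(\mathcal B\) to be the countable invariant Boolean algebra generated by \(\mathcal C_0\), together with the partitions in the free case, and set \(\A=C^*(\mathcal B)\). The main obstacle is the identification of \(\widehat{\A}\) with the Stone space of \(\mathcal B\). We need that projections of \(\A\) are exactly the indicators of elements of \(\mathcal B\). This holds because \(C^*(\mathcal B)\) is the closed span of \(\{1_U: U\in\mathcal B\}\), which is a commutative AF-algebra, and the characters correspond to ultrafilters on \(\mathcal B\). Once this is in place, the free and minimal translations above are routine.
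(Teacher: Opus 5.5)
Your proof is correct, but it takes a different route from the paper's. The paper settles the lemma in two lines. It observes that the finitely many elements lie in the crossed product of some separable invariant subalgebra of \(C(X)\). It then invokes Blackadar's Proposition 2.2 and Lemma 2.5.2 of Sierakowski's thesis to enlarge that subalgebra until the spectrum is totally disconnected and the action is minimal or free. How the two separably inheritable properties are combined is left implicit.

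You instead write the subalgebra down explicitly as \(C^*(\mathcal{B})\) for a countable \(\Gamma\)-invariant Boolean algebra \(\mathcal{B}\) of clopen sets containing \(X\). Generating by projections gives total disconnectedness of \(\widehat{\A}\) at no cost, since it is the Stone space of \(\mathcal{B}\). For freeness you insert, for each \(g \neq e\), a finite clopen partition of \(X\) whose pieces are disjoint from their \(g\)-translates. This witness survives any enlargement, so no alternating or back-and-forth construction is needed, which is exactly the point the citation-based argument glosses over. Your approach gives a self-contained, elementary proof with every dependency visible. The paper's gives brevity and a template that extends to other separably inheritable properties and non-commutative coefficient algebras.

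One correction: your remark that enlarging can destroy minimality is false. For any unital invariant C*-subalgebra \(\A \subseteq C(X)\), the restriction map \(X \to \widehat{\A}\) is continuous, surjective and equivariant, and a factor of a minimal system is minimal. Your covering argument in the minimal case essentially reproves this, so nothing breaks. The minimal case in fact needs no work beyond making \(\A\) unital, which you ensured by putting \(X\) into \(\mathcal{C}_0\).
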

	
	\begin{proof}
		Since the group is countable, it is straightforward to see that any finite subset of \(C(X) \rtimes_r \Gamma\) is contained in the crossed product by \(\Gamma\) of some separable invariant C*-subalgebra of \(C(X)\). Combining this with \cite[Proposition 2.2]{blackadar_weak} and \cite[Lemma 2.5.2]{sierakowski_thesis}, yields the statement. 
	\end{proof}
	
	\begin{theorem}[Putnam]
		\label{thm_putnam}
		Let \(X\) be an infinite compact totally disconnected space and \(\varphi \colon X \to X\) a minimal homeomorphism of \(X\). Then the crossed product \(C(X) \rtimes  \Z\) is a simple locally A\(\T\) algebra of real rank zero. 
	\end{theorem}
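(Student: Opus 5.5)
The plan is to reduce to Putnam's theorem for Cantor minimal systems (the separable, metrizable case) using \cref{lemma_approx}, and to read off simplicity and real rank zero from standard facts.

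\textbf{Simplicity.} The action is minimal. Since \(X\) is infinite and the acting group is \(\Z\), it is also topologically free: the set of points with a given period \(n \neq 0\) is closed and invariant. By minimality it is then either empty or all of \(X\). If it were all of \(X\), every orbit would be finite, and by minimality \(X\) would be a single finite orbit, contradicting that \(X\) is infinite. Topological freeness together with minimality gives simplicity of \(C(X) \rtimes \Z\) by the Archbold--Spielberg theorem. Since \(\Z\) is amenable, the full and reduced crossed products agree, so we may work with \(\rtimes_r\) throughout.

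\textbf{Local A\(\T\) structure.}
\begin{enumerate}
\item Given \(a_1, \ldots, a_n\) and \(\varepsilon > 0\), apply \cref{lemma_approx} in its minimal version. This yields a separable invariant \(\A \subseteq C(X)\) with \(Y := \widehat{\A}\) compact, metrizable and totally disconnected, with minimal induced action, and with \(a_i \in \A \rtimes \Z = C(Y) \rtimes \Z\).
\item Since \(\A\) is unital (we may include \(1\) when building \(\A\)), \(Y\) is compact, and \(C(Y) \rtimes \Z \subseteq C(X) \rtimes \Z\) by amenability of \(\Z\).
\item If \(Y\) is infinite, it is a Cantor set: it is compact, metrizable and totally disconnected, and it has no isolated points, since an isolated point would have an open orbit, forcing \(Y\) to be finite by minimality. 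Putnam's original result for Cantor minimal systems then says \(C(Y) \rtimes \Z\) is an A\(\T\)-algebra. So the \(a_i\) are \(\varepsilon\)-close to elements of a subalgebra that is a direct sum of matrix algebras over \(C(\T)\).
\item If \(Y\) is finite, \(C(Y) \rtimes \Z \cong M_k(C(\T))\), which is already of the required form.
\end{enumerate}

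\textbf{Real rank zero.} I would argue this locally as well. One option is to show each approximating \(C(Y) \rtimes \Z\) has real rank zero when \(Y\) is Cantor, which is a known consequence of Putnam's A\(\T\) description combined with its trivial-\(K_1\)-obstruction, i.e.\ with tracial or projection-density arguments. Real rank zero then passes to \(C(X) \rtimes \Z\) provided the subalgebras can be chosen to contain approximants to any given self-adjoint element. This is exactly what \cref{lemma_approx} gives, since self-adjoint elements with finite spectrum inside the subalgebra approximate the given self-adjoint element.

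\textbf{Main obstacle.} The real rank zero step in the separable Cantor case is the hardest part, since it relies on the internal structure of Putnam's argument. The case where some \(Y\) is finite is a further issue: \(M_k(C(\T))\) does not have real rank zero. To avoid this, I would enlarge the separable subalgebra \(\A\) by adding countably many functions separating points, chosen so that \(Y\) is infinite. This is possible because \(X\) is infinite, so \(C(X)\) contains infinitely many orthogonal projections. We then always land in the Cantor case.
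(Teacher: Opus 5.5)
Your proposal is correct and follows the paper's route: reduce via \cref{lemma_approx} to a metrizable Cantor minimal system, apply Putnam there, and pass local A\(\T\)-structure and real rank zero back to \(C(X) \rtimes \Z\), since both are local properties. Your explicit treatment of simplicity, and of a possibly finite factor \(Y\) (by building infinitely many orthogonal projections into the separable subalgebra), is more careful than the paper, which leaves that case implicit; the one soft spot is real rank zero of \(C(Y) \rtimes \Z\), which you flag as the main obstacle, but the paper gets it simply from the fact that projections separate traces (by freeness, tracial states come from invariant measures, which are determined by their values on clopen sets) together with the Blackadar--Bratteli--Elliott--Kumjian criterion for simple A\(\T\)-algebras, so nothing internal to Putnam's proof and no vanishing of \(K_1\) (which is \(\Z\) here) is needed.
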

	
	\begin{proof}
		Putnam showed in \cite[Theorem 2.1]{putnam} that \(C(X) \rtimes  \Z\) is a locally A\(\T\) algebra. If \(X\) is metrizable, then \(C(X) \rtimes  \Z\) is separable, and we conclude from \cite[§ 4.3]{elliott} that \(C(X) \rtimes  \Z\) is an A\(\T\)-algebra. Since projections separate traces on \(C(X) \rtimes  \Z\), we conclude from \cite[]{blackadar_etal} that \(C(X) \rtimes  \Z\) has real rank zero. Combining this with \cref{lemma_approx}, the statement follows. 
	\end{proof}
	
	\begin{remark}
		If \(X\) is finite, then \(C(X) \rtimes \Z \cong C(\T) \otimes M_n(\C)\), where \(n\) is the cardinality of \(X\), for any minimal homeomorphism \(\varphi \colon X \to X\). 
	\end{remark}
	
	Consider a homeomorphism \(\varphi \colon X \to X\) of a totally disconnected space \(X\). For a closed subset \(Z \subseteq X\), denote by \(\A_Z\) the C*-subalgebra of \(C(X) \rtimes  \Z\) generated by \(C(X)\) and \(u C_0(X \setminus Z)\). In \cite[Theorem 3.3]{putnam_AF}, Putnam showed that if \(\varphi\) is minimal and \(X\) is metrizable, then \(\A_{\{z\}}\) is AF for any \(z \in X\). In \cite[Theorem 2.3]{poon}, Poon formulated a more general dynamical condition on \(\varphi\) ensuring that this construction works. We simply observe that metrizability is irrelevant to the proof. 
	
	\begin{theorem}[Poon]
		\label{thm_poon}
		Let \(\varphi\) be a homeomorphism of a totally disconnected space \(X\), and let \(Z \subseteq X\) be a closed subset. Then the C*-algebra \(\A_Z\) is AF if and only if for every clopen neighborhood \(W\) of \(Z\), we have \[\bigcup_{k \in \Z} \varphi^k(W) = X.\]
	\end{theorem}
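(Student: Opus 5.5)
Since Poon proved the statement in \cite[Theorem 2.3]{poon} for metrizable \(X\), the plan is to show that his argument never uses metrizability, or to reduce to that case by approximation. The most economical route is the reduction, in the spirit of \cref{lemma_approx}. The key observation is that \(\A_Z\) is generated by \(C(X)\) and \(uC_0(X\setminus Z)\), and both properties in the statement are local. AF-ness of \(\A_Z\) is equivalent to LF-ness together with the existence of an increasing net of finite-dimensional subalgebras. The dynamical condition only involves countably many clopen sets at a time.

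For the direction (\(\Leftarrow\)) I would follow Putnam's construction directly, which works verbatim without metrizability. Let \(W\) be a clopen neighbourhood of \(Z\). By compactness of \(X\) and the hypothesis, \(X=\bigcup_{|k|\le N}\varphi^k(W)\) for some \(N\). Hence the first return time to \(W\) under \(\varphi\) is bounded, and likewise under \(\varphi^{-1}\). Refine \(W\) into a finite clopen partition \(W=\bigsqcup_j W_j\) on which the return time \(J_j\) is constant. We may also require that \(\varphi^{k}(W_j)\) lie in prescribed cells of a given finite clopen partition \(\P\) of \(X\). This yields a Kakutani--Rokhlin tower partition
\[X=\bigsqcup_j\bigsqcup_{k=0}^{J_j-1}\varphi^k(W_j).\]
The matrix units \(e_{k,l}^{(j)}=u^{k-l}\chi_{\varphi^l(W_j)}\) then generate \(\bigoplus_j M_{J_j}(\C)\). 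Because the top levels \(\varphi^{J_j-1}(W_j)\) map into \(W\), these matrix units lie in \(\A_W\subseteq\A_Z\), the latter inclusion holding since \(X\setminus W\subseteq X\setminus Z\). This finite-dimensional algebra contains \(C(\P)\) and \(u\chi_{X\setminus W}\). The subalgebras \(\A_W\), for \(W\) running over clopen neighbourhoods of \(Z\) and \(\P\) over finite clopen partitions, form a directed family. Its union is dense in \(\A_Z\), since \(uf\) with \(f\in C_0(X\setminus Z)\) is approximated by \(uf\chi_{X\setminus W}\), and \(C(X)\) is the closure of the union of the \(C(\P)\). Thus \(\A_Z\) is an increasing union of finite-dimensional algebras, hence AF. Here I use that the tower algebras can be chosen increasing as \(W\) shrinks and \(\P\) refines. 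This is Putnam's nesting argument: choose the new base inside the old one and the new partition to refine the old tower partition. It uses only clopen sets and compactness, never a metric.

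For (\(\Rightarrow\)), suppose \(W\) is a clopen neighbourhood of \(Z\) with \(Y:=X\setminus\bigcup_k\varphi^k(W)\neq\emptyset\). Then \(Y\) is closed, \(\varphi\)-invariant, and disjoint from \(Z\). Restriction to \(Y\) gives a surjection \(\A_Z\to C(Y)\rtimes\Z\), since \(C_0(X\setminus Z)\) restricts onto \(C(Y)\). This quotient contains the unitary \(u\) restricted to \(Y\), and \(C(Y)\rtimes\Z\) is not AF. Indeed, choose \(y\in Y\) and a point of its orbit closure lying in a minimal subsystem. That minimal subsystem gives a further quotient isomorphic either to \(C(\T)\otimes M_n\), by the Remark, or to a crossed product carrying a nonzero \(K_1\)-class \([u]\) (the Pimsner--Voiculescu map sends \([u]\) to the generator under the index pairing with an invariant measure). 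Quotients of AF algebras are AF, hence have trivial \(K_1\), which is a contradiction.

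The main obstacle I expect is the nesting step in (\(\Leftarrow\)): producing a genuinely increasing directed net of tower algebras, rather than merely local approximation. If this step resists, I will instead show that \(\A_Z\) is LF and pass to separable invariant subalgebras via \cref{lemma_approx}. On each of these the metrizable Poon theorem gives AF, and I would use that the directed union of such AF subalgebras realizes \(\A_Z\) as a direct limit of AF algebras over a directed set. This still requires checking that the dynamical condition descends to the quotient spaces \(\widehat{\A}\), which holds provided \(\A\) contains \(\chi_W\) for enough clopen neighbourhoods \(W\) of \(Z\).
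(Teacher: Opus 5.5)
Your strategy is the paper's. For sufficiency you use finite-dimensional tower algebras attached to clopen neighbourhoods \(W\) of \(Z\) (the paper cites Poon's Lemma 2.2 for this) and a directed union. For necessity you restrict to \(Y=X\setminus\bigcup_k\varphi^k(W)\) and use a \(K_1\)-obstruction. The necessity half is correct. The detour through a minimal subsystem is unnecessary: the Pimsner--Voiculescu boundary map already sends \([u|_Y]\) to \(\pm[1_Y]\neq 0\) in \(K_0(C(Y))\), which is the paper's ``constants in \(\ker(1-\varphi_*)\)'' argument.

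In the sufficiency half, however, your towers sit on the wrong side of the cut. Take the convention \(ufu^*=f\circ\varphi^{-1}\), which is the one under which your \(e^{(j)}_{k,l}=u^{k-l}\chi_{\varphi^l(W_j)}\) are matrix units. Then \(u\chi_E\) carries \(E\) onto \(\varphi(E)\), and the piece of \(u\) missing from \(\A_W=C^*(C(X),u\chi_{X\setminus W})\) is \(u\chi_W\), not \(u\) on the top levels. Every element of \(\A_W\) has first Fourier coefficient vanishing on \(\varphi(W)\), whereas \(e^{(j)}_{1,0}=u\chi_{W_j}=\chi_{\varphi(W_j)}u\); so \(e^{(j)}_{1,0}\notin\A_W\). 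Dually, your tower algebra does not contain \(u\chi_{X\setminus W}\), since its summands \(u\chi_{\varphi^{J_j-1}(W_j)}\) send tops of towers into bases. As written, your algebras lie in \(\A_{\varphi^{-1}(W)}\), and \(\varphi^{-1}(W)\) need not contain \(Z\). Because \(u^*\A_Z u=\A_{\varphi^{-1}(Z)}\), this could be rescued by conjugating by \(u\) at the end. The clean fix is to put \(W\) at the top: use the levels \(\varphi^k(W_j)\), \(1\le k\le J_j\). Then every upward move \(u\chi_{\varphi^k(W_j)}\), \(1\le k\le J_j-1\), lies in \(uC(X\setminus W)\).

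With that fixed, the nesting step you flag is harmless and needs no inductive choices. For a finite clopen partition \(\P\) refining \(\{W,X\setminus W\}\), put \(B(W,\P)=C^*(C(\P),u\chi_{X\setminus W})\). This is contained in the corrected tower algebra refined by \(\P\)-names along the towers, so it is finite-dimensional. If \(W'\subseteq W\) and \(\P'\) refines \(\P\), then \(B(W,\P)\subseteq B(W',\P')\), because \(u\chi_{X\setminus W}=(u\chi_{X\setminus W'})\chi_{X\setminus W}\). This gives a directed family of finite-dimensional subalgebras whose union is dense in \(\A_Z\).

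Your fallback, by contrast, would not prove the theorem. In the non-separable setting a directed union of AF algebras is in general only LF; this is exactly the distinction the paper stresses after \cref{thm_AF_ideals}. Incidentally, this \((W,\P)\)-indexing is more careful than the paper's own sketch. The paper calls \(\A_W\) finite-dimensional, although it contains \(C(X)\), and takes the union over \(W\) alone, which by itself would only give LF.
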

	
	\begin{proof}
		Necessity is proven exactly like in Poon's paper, see also the proof of \cref{thm_poon_deluxe}. For necessity, Poon shows in \cite[Lemma 2.2]{poon} that whenever \(W\) is a clopen set satisfying the condition above, then \(\A_W\) is finite-dimensional. If \(Z\) is closed, the set of clopen subsets containing it is directed by reverse inclusion since the intersection of two such sets is still a clopen set containing \(Z\). The corresponding C*-subalgebras form an increasing directed system of C*-subalgebras of \(C(X) \rtimes  \Z\) whose union is dense in \(\A_Z\). 
	\end{proof}
	
	\subsection{Uniform Roe algebras and coarse geometry}
	
	For a metric space \(X\), a point \(x \in X\) and \(r > 0\), we denote by \(B_X(x, r)\) the open ball in \(X\) of radius \(r\) around \(x\). We say that a metric space \(X\) is \textit{uniformly locally finite} if for any \(r > 0\), we have \[\sup_{x \in X} \abs{B_X(x, r)} < \infty,\] where \(\abs{\cdot} \) denotes cardinality. A uniformly locally finite space is often referred to as a metric space with \textit{bounded geometry}. Notice that a uniformly locally finite space is necessarily proper, discrete and countable. We will mainly be interested in countable discrete groups and subsets of such. A countable discrete group always admits a proper right (or left) invariant metric, which is unique up to coarse equivalence, see \cite[Proposition 5.5.2]{brown_ozawa}. 
	
	Let \(X\) be a uniformly locally finite space with metric \(d\), and consider the Hilbert space \(\ell^2(X)\) with the standard basis of Dirac functions \((\delta_x)_{x \in X}\). The propagation of a linear operator \(a\) on \(\ell^2(X)\) is \[\operatorname{prop}(a) = \sup \{ d(x, y) : \langle a \delta_y, \delta_x \rangle \neq 0\}.\] Since \(X\) is uniformly locally finite, any linear operator with finite propagation and uniformly bounded matrix coefficients is bounded, see \cite[Lemma 8.1]{winter}. The \textit{uniform Roe algebra} of \(X\), denoted by \(C_u^*(X)\), is the norm-closure of the algebra of such operators in \(\B(\ell^2(X))\). If \(X\) is a discrete group \(\Gamma\), we will always consider it with the unique coarse structure described above. In that case, the uniform Roe algebra may be defined as the crossed product \(\ell^\infty(\Gamma) \rtimes_r \Gamma\) by the left-translation action of \(\Gamma\) on \(\ell^\infty(\Gamma)\). It is not hard to see that then \(C_u^*(\Gamma)\) is the C*-algebra generated by \(\ell^\infty(\Gamma)\) and the reduced group C*-algebra \(C_r^*(\Gamma)\), see \cite[Proposition 5.1.3]{brown_ozawa}. 
	
	The spectrum of \(\ell^\infty(\Gamma)\) is the \textit{Stone--\v{C}ech compactification} \(\beta \Gamma\) of \(\Gamma\). The projections in the C*-algebra \(\ell^\infty(\Gamma)\) are indicator functions of subsets of \(\Gamma\), and they span a dense subalgebra. For a subset \(A \subseteq \Gamma\), the projection \(1_A\) also corresponds to a clopen (closed and open) subset \(K_A\) of \(\beta \Gamma\), which is the closure of \(A\) in \(\beta \Gamma\). As a point set, \(\beta \Gamma\) consists of ultrafilters on \(\Gamma\), and the set \(K_A\) is the set of ultrafilters containing \(A\). The Stone--\v{C}ech compactification is the universal compactification of \(\Gamma\) as a (discrete) topological space, in the sense that any function \(f \colon \Gamma \to K\) into a compact space extends uniquely to a continuous function \(\beta f \colon \beta \Gamma \to K\). It follows from this universal property that the group operation on \(\Gamma\) extends to an associative operation on \(\beta \Gamma\), turning it into a compact right topological semigroup. For more about the algebra of \(\beta \Gamma\), see \cite{hindman_strauss_alg}. We may also view the extension of the left multiplication as an action of \(\Gamma\) on \(\beta \Gamma\). This action is always free, see \cite[Theorem 3.34]{hindman_strauss_alg}, and it is amenable if and only if \(\Gamma\) is an exact group, see \cite[Theorem 5.1.7]{brown_ozawa}. The action of a discrete group on its Stone--\v{C}ech compactification is the universal free action of \(\Gamma\). Moreover, all of its minimal components are isomorphic to the universal minimal free action of \(\Gamma\). 
	
	We will work with uniform Roe algebras both from the dynamical and coarse geometry perspectives. For a subset \(A \subseteq \Gamma\), we denote by \(1_A\) the projection associated to \(A\) both when working with \(C_u^*(\Gamma)\) in the standard representation on \(\ell^2(\Gamma)\) and when treating it as a crossed product. In other words, we somewhat abusevily also denote \(1_{K_A} \in C(\beta \Gamma)\) by \(1_A\). 
	
	There is a coarse analogue of the Lebesgue covering dimension for topological spaces, due to Gromov, called \textit{asymptotic dimension}. We recall the definition and provide another characterization of the base case, which is useful for our purposes. 
	
	\begin{definition}
		Let \(X\) be a metric space. For a non-negative integer \(d\), we say that \(X\) has \textit{asymptotic dimension at most} \(d\) if for every \(r > 0\), there is a partition \(\U\) of \(X\) into subsets of \(X\) with a further partition \[\U = \U^0 \cup \U^1 \cup \cdots \cup \U^d\] such that for each \(i = 0, 1, \ldots, d\), the collection \(\U^{i}\) is uniformly bounded and \(r\)-separated, i.e., 
		\[\sup \{ \operatorname{diam}(U) : U \in \U\} < \infty \quad \text{and} \quad d(U, V) > r \text{ for all distinct } U, V \in \U^{i}.\]
		The asymptotic dimension of \(X\) is the least \(d\) such that \(X\) has asymptotic dimension at most \(d\). 
	\end{definition}
	
	The following is well-known and straightforward. It was also used in \cite{li2018low} in the proof that uniform Roe algebras are AF if and only if the underlying metric space has asymptotic dimension zero. 
	\begin{proposition}
		\label{prop_equiv_rel}
		Let \(X\) be a uniformly locally finite space. For \(r > 0\), let \(\sim_r\) be the equivalence relation on \(X\) defined by \(x \sim_r y\) if and only if there is a finite sequence of points \(x = x_0, x_1, \ldots, x_n = y\) in \(X\) such that \(d(x_{i+1}, x_i) \leq r\) for \(i = 0, 1, \ldots, n-1\). The following are equivalent:
		\begin{enumerate}
			\item The space \(X\) has asymptotic dimension zero. 
			\item For every \(r > 0\), the equivalence relation \(\sim_r\) has uniformly bounded equivalence classes. 
			\item For every \(r > 0\), the equivalence relation \(\sim_r\) has uniformly finite equivalence classes. 
		\end{enumerate}
	\end{proposition}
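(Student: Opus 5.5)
I will prove the implications (i) $\Rightarrow$ (ii) $\Rightarrow$ (iii) $\Rightarrow$ (ii) $\Rightarrow$ (i). The tools are uniform local finiteness and the observation that $\sim_r$-classes are exactly the "$r$-chain components" of $X$. Write $[x]_r$ for the $\sim_r$-class of $x$. Two distinct classes are automatically $r$-separated: if $d(x,y)\le r$ with $x\in[x]_r$ and $y\in[y]_r$, then $x\sim_r y$.

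For (i) $\Rightarrow$ (ii), fix $r>0$. Asymptotic dimension zero gives a partition $\U=\U^0$ of $X$ into sets of diameter at most some $D<\infty$ that are pairwise $r$-separated, i.e.\ $d(U,V)>r$ for distinct $U,V$. Any $r$-chain $x=x_0,\dots,x_n=y$ has consecutive points at distance $\le r$. Such points therefore cannot lie in distinct members of $\U$, so the whole chain stays in the member $U\ni x$. Hence $[x]_r\subseteq U$ and $\operatorname{diam}[x]_r\le D$.

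For (ii) $\Rightarrow$ (iii), if every class has diameter at most $D$, then $[x]_r\subseteq B_X(x,D+1)$. Uniform local finiteness then gives $\abs{[x]_r}\le\sup_x\abs{B_X(x,D+1)}<\infty$.

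For (iii) $\Rightarrow$ (ii), suppose $\abs{[x]_r}\le N$ for all $x$. Any $y\in[x]_r$ is joined to $x$ by an $r$-chain inside $[x]_r$. Deleting repetitions, we may assume the chain visits distinct points, so it has length at most $N-1$. Hence $d(x,y)\le (N-1)r$, and the diameter is bounded by $2(N-1)r$.

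For (ii) $\Rightarrow$ (i), take $\U$ to be the partition into $\sim_r$-classes with $d=0$. It is uniformly bounded by (ii). It is $r$-separated by the observation above, in the weak form $d(U,V)>r$: distinct classes contain no pair of points at distance $\le r$. Note that the definition's $r$-separation requires the strict inequality $d(U,V)>r$. For discrete uniformly locally finite spaces the infimum of distances between two classes might not be attained, so I would instead use the classes of $\sim_{2r}$ to get $d(U,V)>2r>r$ safely.

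The only mildly delicate point is this strict-versus-non-strict separation and the infimum issue, which the replacement of $r$ by $2r$ handles. The rest is routine.
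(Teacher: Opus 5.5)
Your proof is correct and complete. The paper gives no proof of this proposition; it only calls it well-known and straightforward, and your chain-component argument is the standard one.

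One small remark: the passage to $\sim_{2r}$ in (ii) $\Rightarrow$ (i) is unnecessary. Under (ii) every $\sim_r$-class is bounded, hence finite by uniform local finiteness. So for distinct classes $U,V$ the quantity $d(U,V)$ is a minimum over finitely many pairs, each at distance $>r$, and $d(U,V)>r$ holds already. Similarly, your chain bound in (iii) $\Rightarrow$ (ii) gives $\operatorname{diam}[x]_r\le (N-1)r$ directly, without the factor $2$.
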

	
	It was shown in \cite[Theorem 2]{smith} that a countable group has asymptotic dimension zero if and only if it is \textit{locally finite}, i.e., every finitely generated subgroup is finite. 
	
	\section{Crossed products by the integers}
	
	\subsection{\(K\)-theory}
	
	An obvious obstruction for a C*-algebra to be AF is having non-trivial \(K_1\). As we will see, this is the only obstruction in the case of crossed products of totally disconnected spaces by free, topologically transitive actions of the integers. Recall that an action of a group \(\Gamma\) on a space \(X\) is called \textit{topologically transitive} if for any pair \(U, V\) of non-empty open subsets of \(X\), there is an element \(g \in \Gamma\) such that \(U \cap gV \neq \emptyset\). The following computation is standard, but since it lies at the heart of the proof of the main theorem in this section, we include the proof. 
	
	Consider a totally disconnected compact Hausdorff space \(X\) and a homeomorphism \(\varphi \colon X \to X\). For an invariant open set \(U \subseteq X\), set \[B(U, \Z) = \{ f - f \circ \varphi^{-1} : f \in C_c(U, \Z) \}.\] The \(0\)th \textit{homology} of \((U, \varphi)\) is \[H_0(U, \varphi) = C_0(U, \Z) / B(U, \Z).\] We say that an action is \textit{indecomposable} if it admits no non-trivial invariant clopen sets. A homeomorphism of a space is called \(\textit{aperiodic}\) if it has no finite orbits. A homeomorphism is aperiodic if and only if the corresponding action by \(\Z\) is free. 
	
	\begin{proposition}
		\label{prop_K_ideals}
		Let \(X\) be a locally compact space and suppose that \(\varphi \colon X \to X\) is aperiodic, indecomposable and topologically transitive. The \(K\)-theory of an ideal \(\I \subseteq C_0(X) \rtimes \Z\) is given by  
		\[K_0(\I) = H_0(U, \varphi) \quad \text{and} \quad K_1(\I) \cong \begin{cases}
			\Z, \quad & \text{if } \I = C(X) \rtimes \Z \\
			0, \quad & \text{if } \I \text{ is proper},
		\end{cases}\]
		where \(U \subseteq X\) is the invariant open set such that \(\I \cap C_0(X) = C_0(U)\).
	\end{proposition}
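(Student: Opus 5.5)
Since the action is free and $\Z$ is amenable, the crossed product has the ideal separation property. So $\I = C_0(U) \rtimes \Z$, where $U$ is the invariant open set with $\I \cap C_0(X) = C_0(U)$. The restriction $\varphi|_U$ is again a homeomorphism of a totally disconnected locally compact space. The plan is to apply the Pimsner--Voiculescu six-term exact sequence to $C_0(U) \rtimes \Z$:
\[
K_0(C_0(U)) \xrightarrow{1-\varphi_*} K_0(C_0(U)) \to K_0(\I) \to K_1(C_0(U)) \xrightarrow{1-\varphi_*} K_1(C_0(U)) \to K_1(\I) \to K_0(C_0(U)).
\]
Because $U$ is totally disconnected, $K_1(C_0(U)) = 0$ and $K_0(C_0(U)) \cong C_c(U,\Z)$, with $\varphi_*$ acting by $f \mapsto f\circ\varphi^{-1}$. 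The sequence therefore collapses to
\[
K_0(\I) \cong \operatorname{coker}(1-\varphi_*) = H_0(U,\varphi), \qquad K_1(\I) \cong \ker(1-\varphi_*),
\]
where the second group is the group of $\varphi$-invariant compactly supported continuous $\Z$-valued functions on $U$. (The statement writes $C_0(U,\Z)$, which coincides with $C_c(U,\Z)$ for integer-valued functions.)

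It remains to compute $\ker(1-\varphi_*)$. Take an invariant $f \in C_c(U,\Z)$. Each level set $f^{-1}(n)$ is clopen in $U$ and invariant. For $n \neq 0$ it is moreover compact, since $f$ has compact support. A compact subset of $U$ is closed in $X$, and an open subset of $U$ is open in $X$, so $f^{-1}(n)$ for $n\neq 0$ is an invariant clopen subset of $X$. Indecomposability then forces $f^{-1}(n) \in \{\emptyset, X\}$.

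If $f^{-1}(n) = X$ for some $n \neq 0$, then $U = X$ and $X$ is compact, so $\I$ is the whole crossed product and $f$ is the constant $n$. Hence, when $\I = C(X)\rtimes\Z$, the kernel consists of the constants and is isomorphic to $\Z$, generated by $1$. If $\I$ is proper, then $U \neq X$ and no nonzero level set can be all of $X$, so $f = 0$ and $K_1(\I) = 0$.

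One subtlety is the case $U = X$ with $X$ non-compact. There $C_c(X,\Z)$ contains no nonzero invariant functions, so $K_1 = 0$ as well; this matches the statement, which writes $C(X)$ and thus implicitly takes the full algebra to be unital. I would note this explicitly.

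The main obstacle is being careful about which results actually need the hypotheses. The ideal separation step needs freeness, which is aperiodicity, together with amenability of $\Z$; I would cite the standard Sierakowski-type result for this. Topological transitivity appears not to be needed for the $K$-theory computation itself; indecomposability is what does the work. I would also verify that the identification $K_0(C_0(U)) \cong C_c(U,\Z)$ is natural with respect to $\varphi$, so that the Pimsner--Voiculescu map really is $f \mapsto f - f\circ\varphi^{-1}$. A sign or inverse convention here does not change the kernel or the cokernel.
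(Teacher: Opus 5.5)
Your proposal is correct and follows the paper's proof. Both use ideal separation to get $\I \cong C_0(U)\rtimes\Z$, collapse the Pimsner--Voiculescu sequence to $K_0(\I)\cong\operatorname{coker}(1-\varphi_*)$ and $K_1(\I)\cong\ker(1-\varphi_*)$, and then rule out nontrivial invariant compact-open sets; the paper uses transitivity when $U$ is proper and indecomposability when $U=X$, whereas you use indecomposability throughout, which suffices and is implied by transitivity anyway. Your observation that $U=X$ with $X$ non-compact gives $K_1(\I)=0$ correctly covers an edge case the statement leaves implicit.
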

	
	\begin{proof} 
		Consider an ideal \(\I \subseteq C_0(X) \rtimes \Z\). Since \(\varphi\) is aperiodic,  \(\I \cong C_0(U) \rtimes \Z\), where \(U\) is the open subset of \(X\) outside of which the elements of \(\I \cap C(X)\) vanish. The Pimsner-Voiculescu sequence associated to the crossed product \(\I = C_0(U) \rtimes \Z\): 
		\[\xymatrix{	& K_0(C_0(U)) \ar[r]^-{1-\varphi_*} & K_0(C_0(U)) \ar[r]  & K_0(\I) \ar[d] \\
			& K_1(\I) \ar[u] & K_1(C_0(U)) \ar[l] & K_1(C_0(U)) \ar[l] }\]
		Since \(U\) is totally disconnected, this reduces to
		\[\xymatrix{	& C_0(U, \Z) \ar[r]^-{1-\varphi_*} & C_0(U, \Z) \ar[r]  & K_0(\I) \ar[d] \\
			& K_1(\I) \ar[u] & 0 \ar[l] & 0 \ar[l] }\]
		and the group homomorphism \(1-\varphi_*\) is given by \[(1-\varphi_*)f(x) = f(x) - f(\varphi^{-1}(x)) \quad (f \in C_0(U, \Z), x \in U).\] If \( f \in \ker(1 - \varphi_*)\), the support of \(f\) is an invariant compact-open set, whence it follows that \(\ker(1 - \varphi_*)\) is non-trivial if and only if \(U\) contains an invariant compact-open set. Now, if \(K \subseteq U\) were an invariant compact-open set, topological transitivity would imply that \(K = X\). Hence, \(\ker(1 - \varphi_*)\) is trivial as soon as \(U\) is a proper subset of \(X\), and in the case where \(U = X\), the only invariant clopen set is \(X\) and so every element of \(\ker(1 - \varphi_*)\) is constant. 
	\end{proof} 
	
	\subsection{The largest AF-ideal}
	
	The following characterizes when a crossed product of a non-compact, totally disconnected spaces by a free action of the integers is AF. This follows by essentially combining Poon's theorem (\cref{thm_poon}) with \cref{prop_K_ideals}. 
	
		\begin{theorem}
		\label{thm_poon_deluxe}
		Let \(X\) be a locally compact, non-compact, totally disconnected space and let \(\varphi \colon X \to X\) be an aperiodic homeomorphism. Consider the following conditions: 
		\begin{enumerate}
			\item \(C_0(X) \rtimes  \Z\) is AF. 
			\item \(C_0(X) \rtimes  \Z\) is LF. 
			\item The \(K_1\)-group of every quotient of \(C_0(X) \rtimes  \Z\) is trivial. 
			\item For every compact-open set \(K \subseteq X\), we have \(\bigcap_{n \in \Z} \varphi^n(K) = \emptyset\). 
			\item Whenever \(U \subseteq X\) is a proper invariant open set, \(X \setminus U\) contains no non-empty invariant compact-open set. 
			\item Whenever \(U \subseteq X\) is a proper invariant open set, \(X \setminus U\) is non-compact and the action on \(X \setminus U\) is topologically transitive. 
			\item Whenever \(U \subseteq X\) is a proper invariant open set, \(X \setminus U\) is non-compact and contains a point whose orbit is dense. 
		\end{enumerate}
		Conditions (i)-(vi) are equivalent and (vii) implies (vi). If in addition, \(X \setminus U\) contains an isolated point for every proper invariant open set \(U \subseteq X\), we have \((vi) \implies (vii)\). 
	\end{theorem}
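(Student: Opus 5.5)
The plan is to route everything through condition (iv), which is the form in which Poon's theorem (\cref{thm_poon}) applies, and to use the Pimsner--Voiculescu computation of \cref{prop_K_ideals} for the \(K_1\)-obstruction. The cycle I would prove is (i)\(\Rightarrow\)(ii)\(\Rightarrow\)(iii)\(\Rightarrow\)(v)\(\Rightarrow\)(iv)\(\Rightarrow\)(i), and then (v)\(\Leftrightarrow\)(vi) and (vii)\(\Rightarrow\)(vi) separately. Throughout, a subset of the closed invariant set \(F = X\setminus U\) is called compact-open if it is so in the relative topology of \(F\).

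\textbf{The cycle (i)--(v).} The implication (i)\(\Rightarrow\)(ii) is trivial. For (ii)\(\Rightarrow\)(iii), note that quotients of LF-algebras are LF. A unitary in the unitization of an LF-algebra is close to a unitary in (the unitization of) a finite-dimensional subalgebra, and the unitary group there is connected, so \(K_1\) vanishes.

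For (iii)\(\Rightarrow\)(v), suppose \(C\subseteq X\setminus U\) is non-empty, invariant and compact-open. Then \(C\) is closed and invariant in \(X\), so \(X\setminus C\) is invariant and open, and the quotient of \(C_0(X)\rtimes\Z\) by the ideal \(C_0(X\setminus C)\rtimes\Z\) is \(C(C)\rtimes\Z\). In the Pimsner--Voiculescu sequence as in the proof of \cref{prop_K_ideals}, \(K_1(C(C)\rtimes\Z)\) surjects onto \(\ker(1-\varphi_*)\subseteq C(C,\Z)\). This kernel contains the constant function \(1_C\neq 0\), which contradicts (iii).

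For (v)\(\Rightarrow\)(iv), let \(K\) be compact-open and set \(L=\bigcap_n\varphi^n(K)\). Then \(L\) is closed, compact and invariant. If \(L\neq\emptyset\), take \(U=X\setminus L\). This \(U\) is proper, and \(L\) is itself a non-empty invariant compact-open subset of \(X\setminus U = L\), contradicting (v). The converse (iv)\(\Rightarrow\)(v) also holds: a relatively compact-open invariant \(C\subseteq F\) has the form \(K\cap F\) with \(K\subseteq X\) compact-open, so \(C\subseteq\bigcap_n\varphi^n(K)\).

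For (iv)\(\Rightarrow\)(i), pass to the one-point compactification \(X^+=X\cup\{\infty\}\) and extend \(\varphi\) by fixing \(\infty\). Clopen neighbourhoods \(W\) of \(\infty\) are exactly the complements of compact-open sets \(K\subseteq X\). Since \(X^+\setminus\bigcup_k\varphi^k(W)=\bigcap_k\varphi^k(K)\), condition (iv) is precisely Poon's condition for \(Z=\{\infty\}\). Hence \(\A_{\{\infty\}}\subseteq C(X^+)\rtimes\Z\) is AF by \cref{thm_poon}. Now \(C_0(X)\rtimes\Z\) is generated by \(C_0(X)\) and \(uC_0(X)=uC_0(X^+\setminus\{\infty\})\), so it lies in \(\A_{\{\infty\}}\). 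It is moreover an ideal there, and ideals of AF-algebras are AF. This gives (i).

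\textbf{The equivalence with (vi), and (vii).} For (vi)\(\Rightarrow\)(v), suppose \(C\subseteq F\) is non-empty, invariant and compact-open. If \(C=F\), then \(F\) is compact, which is excluded. Otherwise \(C\) and \(F\setminus C\) are disjoint non-empty invariant relatively open sets, which contradicts topological transitivity on \(F\). The implication (vii)\(\Rightarrow\)(vi) is immediate, since a dense orbit meets every non-empty open set. Under the extra hypothesis that \(F\) has an isolated point \(x\), transitivity forces the orbit of \(\{x\}\) to be dense, which gives (vi)\(\Rightarrow\)(vii).

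\textbf{The main obstacle: (v)\(\Rightarrow\)(vi).} Non-compactness of \(F\) is easy: if \(F\) were compact, \(F\) itself would violate (v). Transitivity is the real difficulty, and I am not convinced it holds as literally stated. Take \(X=\Z\times\{0,1\}\) with the shift on each copy and \(U=\emptyset\). Condition (iv) holds, and the crossed product is \(\K\oplus\K\), which is AF. Yet \(X\) is not topologically transitive. So I would first try to settle the intended reading, for instance by imposing transitivity (or indecomposability) on \(X\) as in \cref{prop_K_ideals}, or by restricting to minimal non-empty closed invariant sets \(F\).

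Under such a reading, my approach would be the following. Suppose \(V,W\) are non-empty compact-open subsets of \(F\) with \(\bigcup_n\varphi^n(V)\cap W=\emptyset\). Then the closure of the orbit of \(W\) is a closed invariant subset of \(F\) that misses the open invariant set \(\bigcup_n\varphi^n(V)\). From this separation I would try to manufacture, via (iv), a compact-open set whose orbit-intersection is non-empty. This is the step I expect to require the most care.
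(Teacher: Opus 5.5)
Everything you actually prove is correct, and it uses the same two ingredients as the paper's proof. The first is Poon's theorem for \(Z=\{\infty\}\) in the one-point compactification. The second is the Pimsner--Voiculescu identification \(K_1(C_0(X\setminus U)\rtimes\Z)\cong\ker(1-\varphi_*)\).

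The routing differs only slightly:
\begin{itemize}
\item The paper gets (iii)\(\Rightarrow\)(iv) by mapping \(\A_{\{\infty\}}\) onto \(C(Y)\rtimes\Z\) with \(Y=\bigcap_n\varphi^n(K)\), and it gets (iii)\(\Leftrightarrow\)(v) directly from the kernel computation. You instead close the cycle through (v), using the purely topological step (v)\(\Rightarrow\)(iv).
\item The paper identifies \(\A_{\{\infty\}}\) with the unitization of \(C_0(X)\rtimes\Z\). You use that ideals of AF-algebras are AF. This also holds for nets, because the injective map from \(\B_\lambda/(\B_\lambda\cap\I)\) into \(\B/\I\) is isometric, where \(\B_\lambda\) are the finite-dimensional subalgebras.
\end{itemize}
Your convention that compact-open in (v) means relatively compact-open in \(X\setminus U\) is the right one, and the paper's kernel computation tacitly uses it. Your arguments for (vi)\(\Rightarrow\)(v), (vii)\(\Rightarrow\)(vi) and, with isolated points, (vi)\(\Rightarrow\)(vii) are the paper's.

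Your doubt about (v)\(\Rightarrow\)(vi) is justified, and the defect is in the statement, not in your proposal. The paper's proof establishes only (vi)\(\Rightarrow\)(v) and never argues the converse. Your example \(X=\Z\times\{0,1\}\) with \(U=\emptyset\) satisfies (i)--(v) but not (vi). If one insists on \(U\ne\emptyset\), three rows with \(U=\Z\times\{2\}\) work. What is true is exactly what you proved: (i)--(v) are equivalent, (vii)\(\Rightarrow\)(vi)\(\Rightarrow\)(v), and (vi)\(\Rightarrow\)(vii) under the isolated-point hypothesis.

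Drop your last paragraph, though, because assuming \(X\) transitive or indecomposable does not repair the statement. Adjoin to \(\Z\times\{0,1\}\) a single orbit \((o_k)_{k\in\Z}\) of isolated points, topologized so that \(o_{n+4^j}\to(n,0)\) and \(o_{n+2\cdot 4^j}\to(n,1)\) as \(j\to\infty\).
\begin{itemize}
\item This is an aperiodic, transitive, indecomposable system on a locally compact totally disconnected space.
\item Its non-empty closed invariant sets are the two rows, their union and \(X\), and none of them is compact, so (iv) holds.
\item Yet \(U=\{o_k : k\in\Z\}\) leaves your two-row example as \(X\setminus U\).
\end{itemize}
This also explains why your separation strategy cannot produce a compact-open set with non-empty orbit intersection. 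The invariant pieces being separated need not be compact, and (iv) only detects compact invariant sets.

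Restricting (vi) to minimal closed invariant sets \(F\) does give a correct but essentially trivial equivalence. Minimal sets are automatically transitive, and every non-empty compact invariant set contains a minimal one.
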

	\begin{proof}
		The implication (vii) \(\Rightarrow\) (vi) is well-known and (vi) \(\Rightarrow\) (v) is clear. That (vi) implies (vii) assuming the existence of isolated points is mentioned in \cite[Proposition 5.3]{akin}, but we include the short argument for the reader's convenience: Suppose that \(x \in X \setminus U\) is an isolated point. Since \(\varphi\) is a homeomorphism, \(\varphi^n(x)\) is isolated for each \(n \in \Z\) and so the orbit of \(x\) is an open invariant set, which is then dense by topological transitivity. 
		
		We now concentrate on the equivalence of the first six conditions. That (i) implies (ii) is clear. Secondly, since every quotient of an LF-algebra is LF, it follows that (ii) implies (iii). Moreover, (i) and (iv) are equivalent by Poon's theorem (\cref{thm_poon}) since (iv) is a reformulation of Poon's condition in the case \(Z = \{ \infty \}\) viewed as a subspace of the one-point compactification \(\tilde{X}\) of \(X\). Indeed, if \(W\) is a clopen neighborhood of \(\infty\) in \(\tilde{X}\), it contains a basic neighborhood of \(\infty\), i.e., one of the form \((X \setminus K) \cup \{ \infty \}\) for a compact set \(K \subseteq X\). Since \(X\) is totally disconnected, \(K\) can be covered by compact-open sets and so we may assume that \(K\) itself is compact-open at the cost of making \(X \setminus K\) even smaller. Using this, it is immediate that (iv) is equivalent to Poon's condition. 
		
		That (iii) implies (iv) is essentially the same argument as in the proof of \cref{thm_poon}. We include it here for completeness: If \(K \subseteq X\) is a compact open set such that \(\bigcap_{n \in \Z} \varphi^n(K) \neq \emptyset\), then \(W = (X \setminus K) \cup \{ \infty \}\) is a clopen set in \(\tilde{X}\) such that \(\bigcup_{n \in \Z} \varphi^n(W)\) is a proper invariant open subset. Set \(Y = X \setminus \bigcup_{n \in \Z} \varphi^n(W)\). Then the restriction map \(C(\tilde{X}) \to C(Y)\) induces a surjective \(*\)-homomorphism \[\pi \colon A_{ \{\infty \}} \to C(Y) \rtimes  \Z,\] where \(A_{ \{\infty \}}\) is the C*-subalgebra of \(C(\tilde{X}) \rtimes  \Z\) generated by \(C(\tilde{X})\) and \(uC_0(X)\), which is isomorphic to the unitization of \(C_0(X) \rtimes  \Z\). Now by the proof of \cref{prop_K_ideals}, the \(K_1\)-group of \(C(Y) \rtimes  \Z\) is non-trivial, since it is isomorphic to the \(\varphi\)-invariant integer-valued functions on \(Y\), which at least contains the constants. 
		
		It follows from the proof of \cref{prop_K_ideals} that (iii) and (v) are equivalent. Indeed, every quotient of \(C_0(X) \rtimes  \Z\) is of the form \(C_0(X \setminus U) \rtimes  \Z\) for some invariant open set \(U \subseteq X\). By the Pimsner-Voiculescu exact sequence, we see that \[K_1(C_0(X \setminus U) \rtimes  \Z) \cong \ker(1 - \varphi_*)\] in the notation of the proof of \cref{prop_K_ideals}. This group vanishes if and only if there are no invariant compact-open subsets of \(X \setminus U\), as in the proof of \cref{prop_K_ideals}.
		
		At last, (vi) implies (v): Consider any non-compact space \(Y\) with a topologically transitive action. If \(K\) is an invariant compact-open set, then \(Y \setminus K\) is open and so there is \(k\) such that \(\varphi^k(K)\) intersects \(Y \setminus K\) non-trivially by topological transitivity, but this contradicts that \(K\) is invariant. 
	\end{proof}
	
	\begin{remark}
		The implication (vi) \(\Rightarrow\) (vii) does not hold in general, see \cite[Example 7.4]{akin}.
	\end{remark}
	
	As we saw in \cref{subsec_AF}, every C*-algebra has a greatest LF-ideal. In this section, we use \cref{thm_poon_deluxe} to identify the largest LF-ideal in certain crossed products in dynamical terms and observe that it is in fact AF. Interestingly, it coincides with the intersection of all maximal ideals in the crossed product, whence it follows that the quotient by the largest AF-ideal embeds into the direct product of all simple quotients of the original algebra. 
	
	Consider the family \((Y_\alpha)_{\alpha \in \bbA}\) of minimal closed invariant sets for \(\varphi\) in \(X\). Let \(Y\) denote the closure of \(\bigcup_{\alpha \in \bbA} Y_\alpha\) in \(X\) and set \(U = X \setminus Y\). It follows that \(U\) contains no non-empty compact invariant set for \(\varphi\). Indeed, if \(K \subseteq U\) were such a set, it would be be a closed invariant subset of \(X\). Hence, it would contain \(Y_\alpha\) for some \(\alpha \in \bbA\), which contradicts that \(K \subseteq U\). Then it follows from item (v) in \cref{thm_poon_deluxe} that the ideal \(\I=C_0(U) \rtimes \Z\) in \(C(X) \rtimes \Z\) is AF. In fact, \(\I\) is the greatest LF-ideal: If \(V \subseteq X\) is any open invariant subset such that \(C_0(V) \rtimes \Z\) is LF, then \(V\) cannot contain any compact invariant sets. Indeed, if it did, then it would have a quotient with non-trivial \(K_1\)-group, contradicting item (iii) in \cref{thm_poon_deluxe}. It follows that \[\bigcup_{\alpha \in \bbA} Y_\alpha \subseteq X \setminus V \implies Y \subseteq X \setminus V,\] which is to say that \(V \subseteq U\). We collect this and some other easy observations about \(C_0(U) \rtimes \Z\) and the corresponding quotient in the following theorem. 
	
	\begin{theorem}
		\label{thm_maxAF}
		Let \(X\) be a compact Hausdorff space and consider an aperiodic homeomorphism \(\varphi \colon X \to X\). Let \(U\), \((Y_\alpha)_{\alpha \in \bbA}\) and \(Y\) be as described above. The ideal \(C_0(U) \rtimes \Z\) in \(C(X) \rtimes \Z\) is AF, and the greatest LF-ideal in \(C(X) \rtimes \Z\), as well as the intersection of all maximal ideals in \(C(X) \rtimes \Z\). Moreover, the following hold: 
		\begin{enumerate} 
			\item The quotient \(C(Y) \rtimes \Z\) embeds into \[\prod_{\alpha \in \bbA} C(Y_\alpha) \rtimes \Z,\] and this inclusion is proper as soon as \(\bigcup_\alpha Y_\alpha \neq Y\). 
			\item The space \(Y\) is perfect, i.e., contains no isolated points. 
		\end{enumerate}
	\end{theorem}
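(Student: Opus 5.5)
The discussion preceding the statement already shows that \(C_0(U) \rtimes \Z\) is AF and is the greatest LF-ideal, so three things remain: the identification with the intersection of all maximal ideals, and items (i) and (ii).

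\textbf{Maximal ideals.} We use that \(\varphi\) is aperiodic, so every ideal of \(C(X) \rtimes \Z\) has the form \(C_0(V) \rtimes \Z\) for an open invariant set \(V\), as in the proof of \cref{prop_K_ideals}. Under this correspondence, maximal ideals correspond to maximal proper open invariant sets \(V\). These are exactly the complements of minimal closed invariant sets. Note that \(X \setminus V\) is non-empty and compact, and every non-empty compact invariant set contains a minimal one by Zorn. Hence the maximal ideals are precisely the ideals \(C_0(X \setminus Y_\alpha) \rtimes \Z\). Their intersection is \(C_0(V_0) \rtimes \Z\), where \(V_0\) is the interior of \(\bigcap_\alpha (X \setminus Y_\alpha)\), that is, the complement of \(\overline{\bigcup_\alpha Y_\alpha} = Y\). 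So \(V_0 = U\). To justify that intersections of ideals correspond to interiors of intersections of the open sets, we use the ideal separation property: the intersection of the ideals is again an ideal \(C_0(W) \rtimes \Z\), and \(C_0(W) = \bigcap_\alpha C_0(X\setminus Y_\alpha)\) forces \(W = U\).

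\textbf{Item (i).} The quotient by \(C_0(U) \rtimes \Z\) is \(C(Y) \rtimes \Z\). The restriction maps \(C(Y) \to C(Y_\alpha)\) are equivariant and induce quotient maps \(C(Y) \rtimes \Z \to C(Y_\alpha) \rtimes \Z\). Assembling them gives a map into the product, whose kernel is the intersection of the images in \(C(Y)\rtimes\Z\) of the maximal ideals. By the previous paragraph this kernel is zero, so the map is injective. Alternatively, the kernel is an ideal \(C_0(W) \rtimes \Z\) of \(C(Y)\rtimes\Z\) with \(W\) disjoint from every \(Y_\alpha\) and open in \(Y\); since \(\bigcup Y_\alpha\) is dense in \(Y\), this forces \(W = \emptyset\).

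For properness, suppose \(y \in Y \setminus \bigcup_\alpha Y_\alpha\). I would exhibit an element of the product not in the image. Each \(C(Y_\alpha)\) sits inside the product diagonally, and the image of \(C(Y)\) lies in \(\prod C(Y_\alpha)\) as the family of restrictions of one continuous function on \(Y\). Because every \(Y_\alpha\) is closed invariant and distinct minimal sets are disjoint, \(y\) is a limit of points from the \(Y_\alpha\) without lying in any of them. Choose a net \(y_i \to y\) with \(y_i \in Y_{\alpha_i}\), and take \(f_\alpha = 0\) or \(1\) alternating so that no continuous function on \(Y\) restricts to \((f_\alpha)\). This requires that infinitely many distinct \(\alpha_i\) are involved, which holds because finitely many closed \(Y_\alpha\) would have closed union not containing \(y\). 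We can also ensure each \(f_\alpha\) is constant, hence continuous, on \(Y_\alpha\). Then \((f_\alpha)\) is not in the image of the whole crossed product: composing with the canonical conditional expectations, which are compatible, any preimage would have zeroth Fourier coefficient a continuous function on \(Y\) restricting to \((f_\alpha)\). The \emph{main obstacle} is arranging the alternation along a single net. I would pass to a subnet and split the index set \(\{\alpha_i\}\) into two cofinal classes, using that continuity at \(y\) forces the values along any net to converge.

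\textbf{Item (ii).} Suppose \(y \in Y\) is isolated in \(Y\). Its orbit consists of isolated points and is open in \(Y\), so it meets the dense set \(\bigcup Y_\alpha\). Hence \(y\) lies in some \(Y_\alpha\), since each \(Y_\alpha\) is invariant. Then \(Y_\alpha\) is a minimal set containing an isolated point, so \(Y_\alpha\) equals the closure of the orbit of \(y\), and this orbit is open in \(Y_\alpha\). Its complement in \(Y_\alpha\) is closed and invariant, so by minimality it is empty, and \(Y_\alpha\) is the orbit itself. The orbit is infinite by aperiodicity, but a compact set consisting of an infinite discrete orbit is impossible. This contradiction shows that \(Y\) is perfect.
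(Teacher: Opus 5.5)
Your handling of the maximal ideals, of injectivity in (i), and of (ii) is fine and matches the paper. The paper uses ideal separation for the intersection, density of $\bigcup_\alpha Y_\alpha$ for injectivity, and an isolated point of $Y$ lying in some $Y_\alpha$; your (ii) even spells out why that contradicts minimality. Your reduction in the properness part via conditional expectations is also correct.

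The gap is exactly at the step you flag. You need disjoint $S_0,S_1\subseteq\bbA$ with $y\in\overline{\bigcup_{\alpha\in S_0}Y_\alpha}\cap\overline{\bigcup_{\alpha\in S_1}Y_\alpha}$, and for $y\in Y\setminus\bigcup_\alpha Y_\alpha$ such sets need not exist. Let $\mathcal{F}_y$ be the family of $S\subseteq\bbA$ with $y\in\overline{\bigcup_{\alpha\in S}Y_\alpha}$. Since closures commute with finite unions, if $\mathcal{F}_y$ contains no two disjoint sets it is a free ultrafilter on $\bbA$. Then along any net $y_i\to y$ the index $\alpha_i$ is eventually in every member of $\mathcal{F}_y$. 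Passing to subnets only refines this, so it cannot help. Moreover, every $\{0,1\}$-valued family is consistent with continuity at $y$, via $g(y)=\lim_{\mathcal{F}_y}f_\alpha$. Knowing that infinitely many $\alpha_i$ occur does not rule this out, and it genuinely happens outside the metrizable case, where sequences could be split.

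Here is an example. Let $X=\beta(\N\times C)$, where $C$ is the $2$-adic integers $\Z_2$ with $\psi(x)=x+1$, and let $\varphi$ extend $\mathrm{id}\times\psi$. This is aperiodic: for $k\neq 0$, the closures of $\N\times(a+2^m\Z_2)$ with $2^m>\abs{k}$ form a finite clopen partition, and $\varphi^k$ maps each piece onto a different piece.
1. The closure $V$ of $\bigsqcup_n\{n\}\times 2^n\Z_2$ meets the corona.
2. A minimal point $w\in V$ in the corona would lie in $\bigcap_{\abs{j}\le J}\bigcup_{\abs{k}\le L}\varphi^{-j-k}(V)$ for some $L$ and all $J$. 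For $J>2L$ this clopen set is the closure of its trace on finitely many $\{n\}\times C$, so it misses the corona, a contradiction.
3. So $V$ is disjoint from every minimal set in the corona. For $z\in V$ in the corona, $\mathcal{F}_z=\{S: S\cap\N\in p\}$. Here $\{n\}\times C$ is indexed by $n\in\N\subseteq\bbA$, and $p\in\beta\N\setminus\N$ is the image of $z$.

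So starting from $y=z$, your construction cannot succeed.

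The fix is to work at a point of a minimal set rather than outside all of them. If every $Y_\alpha$ were open in $Y$, then $Y\setminus\bigcup_\alpha Y_\alpha$ would be a compact invariant set containing no minimal set, hence empty. So if $Y\neq\bigcup_\alpha Y_\alpha$, there is $\alpha_0$ with $Y_{\alpha_0}$ not open in $Y$. By density of $\bigcup_\alpha Y_\alpha$ there is then a point $z\in Y_{\alpha_0}\cap\overline{\bigcup_{\beta\neq\alpha_0}Y_\beta}$. Take $f_{\alpha_0}=1$ and $f_\beta=0$ for $\beta\neq\alpha_0$, i.e. the unit of the $\alpha_0$-th factor. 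Your conditional-expectation argument then shows this element is not in the image, since a continuous extension would take both values $1$ and $0$ at $z$.

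This is the paper's argument in concrete form. The paper notes that surjectivity would make each coordinate kernel $C_0(Y\setminus Y_\alpha)\rtimes\Z$ unital. That forces every $Y_\alpha$ to be clopen in $Y$, and hence $Y=\bigcup_\alpha Y_\alpha$.
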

	
	\begin{proof}
		The maximal ideals in \(C(X) \rtimes \Z\) are \(C_0(X \setminus Y_\alpha) \rtimes \Z\) for \(\alpha \in \bbA\), and so it is clear that \[C_0(U) \rtimes \Z \subseteq \bigcap_{ \alpha \in \bbA} C_0(X \setminus Y_\alpha) \rtimes \Z.\] For the other inclusion, consider an element \(x\) in the intersection on the right-hand side. Its Fourier coefficients all have to be supported on \(X \setminus \bigcup_{\alpha \in \bbA} Y_\alpha\). Since the action is free, the reverse inclusion follows from \cite{sierakowski}. The map \(C(Y) \rtimes \Z \to \prod_{\alpha \in \bbA} C(Y_\alpha) \rtimes \Z\) is the product of the restriction maps \(C(Y) \rtimes \Z \to C(Y_\alpha) \rtimes \Z\) for \(\alpha \in \bbA\), and it is injective since \(\bigcup_{\alpha \in \bbA} Y_\alpha\) is dense in \(Y\). For the second part of (i), we argue contrapositively: Suppose that the inclusion is actually an isomorphism. Then the maximal ideals in \(\prod_{\alpha} C(Y_\alpha) \rtimes \Z\) are \(\prod_{\beta \neq \alpha} C(Y_\beta) \rtimes \Z\) for \(\alpha \in \bbA\), all of which are unital. The corresponding maximal ideals in \(C(Y) \rtimes \Z\) are \(C_0(Y \setminus Y_\alpha) \rtimes \Z\) for \(\alpha \in \bbA\). It follows then that \(Y \setminus Y_\alpha\) is compact and that \(Y_\alpha\) is open in \(Y\) for every \(\alpha \in \bbA\). Hence, \(\bigcup_{\alpha \in \bbA} Y_\alpha\) is an invariant open set and so \(Y \setminus \bigcup_\alpha Y_\alpha\) is a compact invariant set that admits no minimal closed invariant set. Hence, it is empty and we conclude that \(Y = \bigcup_\alpha Y_\alpha\). 
		
		Towards (ii), assume for contradiction that \(x \in Y\) is an isolated point. Since \(\bigcup_\alpha Y_\alpha\) is dense in \(Y\) and \(\{x\}\) is open, it follows that \(x \in Y_\alpha\) for some \(\alpha\). Since \(x\) is isolated in \(Y\), there is an open set \(U \subseteq X\) such that \(U \cap Y = U \cap Y_\alpha = \{x\}\), and so \(x\) is also isolated in \(Y_\alpha\), but this contradicts minimality of \(Y_\alpha\). 
	\end{proof}
	
	\begin{remark} 
		\begin{enumerate}
			\item If \(\bbA\) is a finite set, then the embedding is of course an isomorphism, and we conclude that $C(X) \rtimes \Z$ is an extension of an AF-algebra by a (locally) A$\T$-algebra of real rank zero with vanishing exponential map, by \cref{prop_K_ideals}. It follows that $C(X) \rtimes \Z$ has real rank zero. This is known, by a similar inductive extension argument, in \cite[Corollary 7.11]{bezuglyi}. More generally, it was recently shown by An and Liu that any free action of \(\Z\) on a totally disconnected space yields a crossed product of real rank zero, see \cite[Corollary 4.3]{an}.
			\item It was shown in \cite[Theorem 8.9]{herman}, using Zorn's lemma, that there is a closed invariant subset \(Y\) such that the corresponding ideal is the largest AF-ideal in \(C(X) \rtimes \Z\), without providing this explicit description. In the presence of the ideal separation property, the existence of such a set follows from \cref{prop_LF}.
		\end{enumerate}
	\end{remark}
	
	In the case of the uniform Roe algebra of \(\Z\), i.e., when \(X = \beta \Z\), the space \(Y\) described above is well-known in topological semigroup theory. In this case, each \(Y_\alpha\) is a minimal left ideal in the semigroup \(\beta \Z\). They are all isomorphic as dynamical systems to the universal minimal \(\Z\)-system, see for example \cite[Theorem 19.8]{hindman_strauss_alg}, and each \(Y_\alpha\) is a perfect Stonean space. Their union \(\bigcup_{\alpha \in \bbA} Y_\alpha\) is the smallest two-sided ideal in the semigroup \(\beta \Z\), often denoted \(K(\beta \Z)\). Thinking of the points of \(\beta \Z\) as ultrafilters on \(\Z\), it is well-known that an ultrafilter \(p\) lies in \(K(\beta \Z)\) if and only if every \(A \in p\), the set \(\{x \in \Z : A-x \in p\}\) is syndetic, and \(p\) lies in \(Y = \overline{K(\beta \Z)}\) if and only if  every \(A \in p\) is piecewise syndetic, see \cite[Theorem 4.40]{hindman_strauss_alg}.
	
	Isolated points give rise to minimal projections in the crossed product. As seen in the next proposition, any projection that admits a finite-dimensional representation corresponds to a set of isolated points in some closed invariant subset. 
	
	\begin{lemma}
		\label{lemma_proj}
		Let \(\Gamma\) be an exact group acting freely on a totally disconnected, compact space \(X\). Every ideal in \(C(X) \rtimes_r \Gamma\) that is generated by a projection is generated by a projection in \(C(X)\). 
	\end{lemma}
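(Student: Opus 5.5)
Let \(p \in C(X) \rtimes_r \Gamma\) be a projection and \(\I\) the ideal it generates. Since \(\Gamma\) is exact and the action on \(X\) is free, \(C(X) \rtimes_r \Gamma\) has the ideal separation property (the result of Sierakowski cited in the proof of \cref{thm_maxAF}). Hence \(\I = C_0(U) \rtimes_r \Gamma\) with \(U \subseteq X\) the invariant open set given by \(\I \cap C(X) = C_0(U)\). The task is then to find a compact-open set \(K \subseteq U\) such that \(1_K\) generates \(\I\). Because \(U\) is invariant, the ideal generated by \(1_K\) is \(C_0(\Gamma K) \rtimes_r \Gamma\), again by ideal separation. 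So it suffices to find a compact-open \(K \subseteq U\) with \(\Gamma K = U\).

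First we obtain \(K\) from compactness of \(p\) inside the ideal. Since \(p\) is a projection in \(\I = C_0(U)\rtimes_r\Gamma\), we can approximate it within \(1/2\) by an element \(a\) of the algebraic crossed product \(C_c(U)\Gamma\), that is, a finite sum \(\sum_g f_g u_g\) with each \(f_g \in C_c(U)\). Total disconnectedness lets us choose a compact-open \(K \subseteq U\) containing all the supports of the \(f_g\). Then \(a = 1_K a\). Set \(q = 1_K\). We have \(\norm{p - qp} \leq \norm{p-a} + \norm{(1-q)(a-p)} < 1\), with the intermediate step \(qa = a\). The standard argument then gives \(\norm{p - qpq}\) small, and \(p\) is Murray--von Neumann subequivalent to \(q\) in \(C(X)\rtimes_r\Gamma\). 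A cleaner formulation: \(\norm{p(1-q)p} < 1\) implies that \(pqp\) is invertible in \(p(\cdot)p\), so \(p = pqp\,b\) for some \(b\). Therefore \(p\) lies in the ideal generated by \(q\), giving \(\I \subseteq \langle q \rangle\).

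Conversely, \(q = 1_K \in C_0(U) \subseteq \I\), so \(\langle q\rangle \subseteq \I\). Hence \(\I = \langle 1_K\rangle\), and we never actually need \(\Gamma K = U\) explicitly, although it follows afterwards. The same conclusion can also be reached without ideal separation for the reverse inclusion: \(1_K \in \I\) directly because \(K \subseteq U\) and \(C_0(U) = \I \cap C(X)\).

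The main obstacle is the first step, identifying \(\I\) with \(C_0(U)\rtimes_r\Gamma\) so that \(p\) can be approximated by elements with coefficients compactly supported in \(U\). This is exactly where exactness and freeness enter, through ideal separation. I would spell out that \(C_c(U)\Gamma\) is dense in \(C_0(U)\rtimes_r\Gamma\), which is routine. The remaining invertibility argument in the corner \(p(C(X)\rtimes_r\Gamma)p\) is standard functional calculus.
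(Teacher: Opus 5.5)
Your proof is correct, and it shares its first step with the paper's: both use Sierakowski's ideal separation theorem (where exactness and freeness enter) to see that \(\I\) is generated by \(\I \cap C(X) = C_0(U)\), i.e.\ that \(\I\) is the closure of \(C_c(U)\Gamma\). The routes differ in how \(p\) is then shown to lie in an ideal generated by a single diagonal projection.

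The paper takes the increasing net of ideals \(\I_\lambda\) generated by finite sets of projections in \(\I \cap C(X)\). Their union is a dense ideal of \(\I\), hence contains the Pedersen ideal of \(\I\), which contains every projection of \(\I\); so \(p \in \I_\lambda\) for some \(\lambda\).

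You instead approximate \(p\) by a single \(a \in C_c(U)\Gamma\) and take a compact-open \(K \subseteq U\) containing the supports of its coefficients. Then \((1-1_K)a = 0\) gives \(\norm{(1-1_K)p} = \norm{(1-1_K)(p-a)} < 1\), so your triangle inequality is not even needed. Hence \(p1_Kp\) is invertible in \(p(C(X)\rtimes_r\Gamma)p\), and \(p \in \langle 1_K \rangle\). In effect you reprove by hand, in this instance, the fact that projections lie in every dense ideal.

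The paper's version is shorter because it cites a general structural fact, the minimality of the Pedersen ideal among dense ideals. Yours is self-contained and reads the generator \(1_K\) off the supports of one approximant. It also justifies your side remark \(p \precsim 1_K\): the partial isometry \(v = 1_K p (p1_Kp)^{-1/2}\) satisfies \(v^*v = p\) and \(vv^* \le 1_K\).
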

	
	\begin{proof}
		This is similar to the proof \cite[Proposition 5.3]{kellerhals}. Consider an ideal \(\I\) in \(C(X) \rtimes_r \Gamma\) that is generated by a projection \(q\). By \cite[Theorem 1.6]{sierakowski}, \(\I\) is generated by \(\I \cap C(X)\). Consider the directed set \(\Lambda\) of finite subsets of projections in \(\I \cap C(X)\). For each \(\lambda \in \Lambda\), let \(\I_\lambda\) denote the ideal generated by the projections in \(\lambda\). Then clearly \(\I\) is the closure of the union \(\bigcup_{\lambda \in \Lambda} \I_\lambda \). Hence, \(\bigcup_{\lambda \in \Lambda} \I_\lambda \) contains the Pedersen ideal of \(\I\), see \cite[Section 5.6]{pedersen_blue}. Since the Pedersen ideal contains all projections of \(\I\), there must be some \(\lambda \in \Lambda\) such that \(I_\lambda\) contains the generator \(q\). Hence, \(\I = \I_\lambda\) and \(\I\) is generated by the supremum of the projections in \(\lambda\). 
	\end{proof}
	
	\begin{proposition}
		\label{prop_proj}
		Let \(\Gamma\) be an amenable group acting freely on a totally disconnected, compact space \(X\), and consider the associated crossed product \(C(X) \rtimes \Gamma\). The following are equivalent. 
	\begin{enumerate}
			\item \(C(X) \rtimes \Gamma\) admits a representation mapping a projection to a non-zero finite-rank projection. 
			\item \(C(X) \rtimes \Gamma\) admits an irreducible representation mapping a projection to a non-zero finite-rank projection. 
			\item There is a non-empty clopen set \(A \subseteq X\) and a closed invariant set \(K \subseteq X\) such that \(A \cap K\) is a finite set of isolated points in \(K\).
			\item \(C(X) \rtimes \Gamma\) admits a representation mapping a diagonal projection to a non-zero finite-rank projection. 
		\end{enumerate}
	\end{proposition}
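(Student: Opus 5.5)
We prove the cycle (iv) \(\Rightarrow\) (i) \(\Rightarrow\) (ii) \(\Rightarrow\) (iii) \(\Rightarrow\) (iv). The implication (iv) \(\Rightarrow\) (i) is trivial.

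For (i) \(\Rightarrow\) (ii), let \(\pi\) be a representation with \(\pi(p)\) non-zero of finite rank. Then \(\pi(p)\pi(\A)\pi(p)\) acts on the finite-dimensional space \(\pi(p)H\). We pick an irreducible subrepresentation of the compression: take a minimal projection \(e\) in the finite-dimensional C*-algebra \(\pi(p\A p)''\), restricted to \(\pi(p)H\), and a unit vector \(\xi \in eH\). The vector state \(\omega = \langle \pi(\cdot)\xi,\xi\rangle\) restricted to \(p\A p\) is pure. We extend it to a pure state of \(\A\) whose GNS representation \(\rho\) is irreducible. In \(\rho\), the compression \(\rho(p)H_\rho\) is an irreducible representation of \(p\A p\) that is a quotient of \(\pi(p\A p)\) acting on \(eH\), hence finite-dimensional. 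Therefore \(\rho(p)\) is a non-zero finite-rank projection.

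For (ii) \(\Rightarrow\) (iii), let \(\rho\) be irreducible with \(\rho(q)\) non-zero of finite rank. Let \(\J = \ker\rho\). Since \(\Gamma\) is amenable (hence exact) and acts freely, \(\J = C_0(X\setminus K)\rtimes\Gamma\) for a closed invariant \(K\) by \cite{sierakowski}, and \(\rho\) factors through \(C(K)\rtimes\Gamma\), which is again a crossed product of a totally disconnected space by a free action. Since \(\rho(q)\) has finite rank and \(\rho\) is irreducible, \(\rho(\A)\) contains the compacts, so \(\rho^{-1}(\K(H_\rho))\) is an ideal of \(C(K)\rtimes\Gamma\) containing the image \(\bar q\) of \(q\). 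By \cref{lemma_proj} applied to \(C(K)\rtimes\Gamma\), the ideal generated by \(\bar q\) is generated by a projection \(1_B\) with \(B \subseteq K\) clopen and non-empty. Moreover \(1_B\) lies in the ideal generated by \(\bar q\), so \(\rho(1_B)\) is compact, hence a finite-rank projection, and it is non-zero since \(\rho\) is faithful on \(C(K)\rtimes\Gamma\). Choose a clopen \(A \subseteq X\) with \(A \cap K = B\).

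It remains to show that \(B\) is finite and consists of isolated points of \(K\). The restriction of \(\rho\) to \(C(K)\), compressed by \(\rho(1_B)\), gives a finite-dimensional representation of \(C(B)\). It is faithful, since \(\rho\) is faithful on \(C(K)\rtimes\Gamma\) and hence on \(C(K)\). A commutative C*-algebra with a faithful finite-dimensional representation is finite-dimensional, so \(C(B)\) is finite-dimensional. Hence \(B\) is finite, and being clopen in \(K\), its points are isolated in \(K\).

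For (iii) \(\Rightarrow\) (iv), set \(B = A\cap K\) and pick \(x\in B\). Its orbit \(\Gamma x\) consists of isolated points of \(K\), since homeomorphisms preserve isolatedness. Consider the regular representation of \(C(K)\rtimes\Gamma\) on \(\ell^2(\Gamma x)\), which is well defined by freeness and equivalent to the induced representation from the point evaluation at \(x\). Compose it with the quotient map from \(C(X)\rtimes\Gamma\). Under this representation, \(1_A\) maps to the projection onto \(\ell^2(A\cap\Gamma x)\), which has finite rank because \(A\cap K\) is finite, and is non-zero because it contains \(x\).

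The main obstacle is (ii) \(\Rightarrow\) (iii). One must correctly pass to the quotient by \(\ker\rho\), using exactness and freeness so that \cref{lemma_proj} applies to \(C(K)\rtimes\Gamma\), and then ensure that the diagonal generator has finite-rank image. An alternative route for the step (i) \(\Rightarrow\) (ii) is the finite-dimensional decomposition argument via minimal projections in \(\pi(p)\pi(\A)''\pi(p)\).
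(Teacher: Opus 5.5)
Your proof is correct and follows essentially the same route as the paper: the same cycle of implications, compression to the corner \(p\A p\) and extension of an irreducible (pure) piece for (i) \(\Rightarrow\) (ii), then \cref{lemma_proj} combined with the ideal separation property and a faithful finite-dimensional compression of \(C(A \cap K)\) for (ii) \(\Rightarrow\) (iii), and an orbit (regular) representation for (iii) \(\Rightarrow\) (iv). The differences are cosmetic: you apply \cref{lemma_proj} in the quotient \(C(K) \rtimes \Gamma\) rather than in \(C(X) \rtimes \Gamma\), and you treat \(1_A\) directly on \(\ell^2(\Gamma x)\) instead of first reducing to a single isolated point; like the paper, you implicitly read (iii) as requiring \(A \cap K \neq \emptyset\), which is needed since otherwise \(K = \emptyset\) satisfies (iii) trivially.
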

	
	\begin{proof}
		It is clear that (iv) implies (i). That (i) implies (ii) holds for general C*-algebras, but we present the argument for completeness: Consider a representation \(\pi \colon \A \to \B(H)\) of a C*-algebra, and suppose that there is a projection \(p \in \A\) such that \(\pi(p)\) is a non-zero finite-rank projection. Then we have a finite-dimensional representation \(\pi \colon p \A p \to \B(\pi(p)H)\) of the corner \(p \A p\) by compression. Since \(\pi(p)H\) is finite-dimensional, this representation decomposes into a finite sum of irreducible representations. One summand of this, call it \(\pi_0\), must map \(p\) to a non-zero finite-rank projection. It is well-known that irreducible representations extend to irreducible representations, see \cite[Theorem 5.5.1]{murphy}, and so we may take any irreducible extension of \(\pi_0\) to \(\A\). 
		
		We now show that (ii) implies (iii). Consider a representation \(\pi \colon C(X) \rtimes \Gamma \to \B(H)\). We first show that if there exists a projection \(p \in C(X) \rtimes \Gamma\) such that \(\pi(p)\) has non-zero, finite rank, there exists a diagonal projection with the same property. Indeed, let \(\I\) denote the ideal generated by \(p\). By \cref{lemma_proj}, \(\I\) is also generated by a diagonal projection \(q\). Since \(\pi(\I) \subseteq \K(H)\), we conclude that \(\pi(q)\) must be a finite-rank projection, and it must be non-zero since \(\pi(\I)\) is. Let \(p \in C(X)\) be a projection, i.e., \(p = 1_A\) for some clopen set \(A \subseteq X\). If \(\pi(p)\) is of finite rank, then we get a finite-dimensional representation of \(p (C(X) \rtimes \Gamma) p\) by compression. In other words, \(p (C(X) \rtimes \Gamma) p\) has a finite-dimensional quotient. It follows that there is a quotient of $C(X) \rtimes \Gamma$ with a finite-dimensional corner, i.e., there is a closed invariant set \(K \subseteq X\) and a projection \(q \in C(K) \rtimes \Gamma\) such that \(q (C(K) \rtimes \Gamma) q\) is finite-dimensional. In fact, one may choose \(q = 1_{K \cap A}\) and conclude that \(C(K \cap A) \subseteq q (C(K) \rtimes \Gamma) q\) is finite-dimensional. This means that \(A \cap K\) is a finite set consisting of isolated points. 
		
		Next, we show that (iii) implies (iv). Clearly, it suffices to show that if \(X\) admits an isolated point, then \(C(X) \rtimes \Gamma\) admits a representation mapping \(1_{\{x\}}\) to a one-dimensional representation. Indeed, let \(\pi_x \colon C(X) \to \B(\ell^2(\Gamma))\) be given by \( \left( (\pi_x)(f)\xi \right) (g) = f(gx) \xi (g)\) for \(f \in C(X)\) and \(g \in \Gamma\). Then \((\pi, \lambda)\) is a covariant representation on \(\ell^2(\Gamma)\) of the action of \(\Gamma\) on \(X\), where \(\lambda\) denotes the left-regular representation of \(\Gamma\). The integrated form \(\pi_x \rtimes \lambda \colon C(X) \rtimes \Gamma \to \B(\ell^2(\Gamma))\) maps \(1_{\{x\}}\) to the orthogonal projection onto the subspace of vectors supported on the stabilizer subgroup of \(x\), which is one-dimensional since the action is free.
	\end{proof}
	
	\begin{corollary}
		\label{cor_iso}
		Let \(\Gamma\) be an amenable group acting freely on a totally disconnected, compact space \(X\). Then the crossed product \(C(X) \rtimes \Gamma\) admits a separating family of irreducible representations meeting the compacts if and only if \(X\) contains a dense set of isolated points. 
	\end{corollary}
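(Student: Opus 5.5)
We prove the two implications separately. Both rest on \cref{prop_proj} and on the covariant representation \(\pi_x \rtimes \lambda\) built in its proof. Since \(\Gamma\) is amenable and the action is free, every ideal of \(C(X) \rtimes \Gamma\) is determined by its intersection with \(C(X)\) (by \cite{sierakowski}). Hence a family of representations \((\pi_i)\) is separating if and only if \(\bigcap_i \ker \pi_i \cap C(X) = 0\). Equivalently, the open invariant sets \(U_i\) with \(\ker\pi_i \cap C(X) = C_0(U_i)\) must satisfy \(\bigcap_i U_i = \emptyset\), i.e.\ the closed invariant sets \(X \setminus U_i\) (the supports of the \(\pi_i\)) have dense union.

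\emph{Isolated points dense \(\Rightarrow\) separating family.} Let \(D\) be the dense set of isolated points, and for each \(x \in D\) take \(\pi_x \rtimes \lambda\) on \(\ell^2(\Gamma)\). We first show it is irreducible. It contains the rank-one projection onto \(\delta_e\), namely the image of \(1_{\{x\}}\). Conjugating by the unitaries \(\lambda_g\) gives every rank-one projection \(\delta_g \otimes \delta_g\), together with the partial isometries between them. Hence \(\K(\ell^2(\Gamma))\) lies in the image, so the representation is irreducible and meets the compacts. Its kernel intersected with \(C(X)\) is \(C_0(X \setminus \overline{\Gamma x})\). Since \(\bigcup_{x\in D}\overline{\Gamma x} \supseteq D\) is dense, the family is separating by the ideal-separation remark above.

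\emph{Separating family \(\Rightarrow\) isolated points dense.} Let \((\pi_i)\) be a separating family of irreducible representations with \(\pi_i(C(X)\rtimes\Gamma) \cap \K(H_i) \neq 0\). Since \(\pi_i\) is irreducible and meets the compacts, its image contains all of \(\K(H_i)\). The preimage \(\I_i\) of \(\K(H_i)\) is a non-zero ideal; by \cref{lemma_proj} and the real rank zero / ideal separation structure it contains a non-zero diagonal projection \(1_A\) with \(\pi_i(1_A) \ne 0\) compact, hence of finite rank. (Concretely, \(\I_i \cap C(X) = C_0(V)\) is non-zero, and any non-empty clopen \(A \subseteq V\) works.) Now run the argument (ii)\(\Rightarrow\)(iii) of \cref{prop_proj} with \(K_i = X \setminus U_i\), the support of \(\pi_i\). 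It shows that \(A \cap K_i\) is a finite set of isolated points of \(K_i\), and it is non-empty since \(\pi_i(1_A)\ne0\). Every clopen \(A\) with \(\pi_i(1_A)\neq 0\) meets \(K_i\), so each \(K_i\) contains points isolated in \(K_i\).

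The main obstacle is upgrading ``isolated in \(K_i\)'' to ``isolated in \(X\)'' and then to density. We would first show that \(K_i\) is the orbit closure of one such isolated point \(y\). The orbit \(\Gamma y\) is open in \(K_i\), and irreducibility forces \(\ker\pi_i\) to be prime, so \(K_i\) is topologically transitive and \(K_i = \overline{\Gamma y}\).

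Now let \(W \subseteq X\) be a non-empty clopen set. Separation gives some \(i\) with \(\pi_i(1_W) \neq 0\), so \(W \cap K_i \ne \emptyset\) is open in \(K_i\). Since \(\Gamma y\) is dense in \(K_i\), \(W\) contains a point \(gy\) that is isolated in \(K_i\). This is not yet isolated in \(X\), so the separating family must be used more cleverly. We would choose a clopen \(W' \subseteq W\) with \(W' \cap K_i = \{gy\}\). If \(W' \setminus \{gy\}\) is non-empty, we apply separation to a smaller clopen set avoiding \(gy\) and iterate. A cleaner route is to use the ideal \(\bigcap_i \ker\pi_i = 0\) directly: the union of the open invariant sets \(\Gamma y_i\), which are isolated in \(K_i\), should be dense. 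We then argue that a point isolated in an orbit closure \(K_i\) whose orbit meets every clopen set of a neighbourhood base is isolated in \(X\). We expect this step to be the delicate part and to require care with how the \(K_i\) overlap.
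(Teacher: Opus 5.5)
Your first implication is correct. In the converse you correctly reach the point where each support \(K_i\) is the orbit closure \(\overline{\Gamma y_i}\) of a point \(y_i\) isolated in \(K_i\), and \(\bigcup_i K_i\) is dense. (One small slip: not every non-empty clopen \(A\subseteq V\) works, only one with \(\pi_i(1_A)\neq 0\). Such an \(A\) exists because \(\I_i\) is generated by \(C_0(V)\) and \(\pi_i(\I_i)\neq 0\).)

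The genuine gap is the step you leave open, upgrading ``isolated in \(K_i\)'' to ``isolated in \(X\)''. It cannot be closed, because the ``only if'' direction of the statement is false. Take \(\Gamma=\Z\) and an aperiodic homeomorphism \(\psi\) of the Cantor set \(Z\) (e.g.\ the odometer). Let \(X=(\Z\cup\{\infty\})\times Z\) with \(\varphi(n,z)=(n+1,\psi(z))\) and \(\varphi(\infty,z)=(\infty,\psi(z))\).
\begin{itemize}
\item The action is free, and \(X\) is compact, totally disconnected and has no isolated points.
\item For \(x=(0,z)\), the clopen set \(W=\{0\}\times Z\) meets the orbit \(\{(k,\psi^k(z)) : k\in\Z\}\) only in \(x\). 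Since \(W\) is open, it also meets \(\overline{\Z x}\) only in \(x\).
\item Hence \(\pi_x\rtimes\lambda\) sends \(1_W\) to the rank-one projection onto \(\delta_0\). It is therefore irreducible with image containing \(\K(\ell^2(\Z))\).
\item By your own ideal-separation criterion, the family over \(z\in Z\) is separating, since \(\Z\times Z\) is dense in \(X\).
\end{itemize}

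The same happens for \(X=\beta\Z\setminus\Z\), contrary to the remark the paper makes after \cref{prop_inclusion}. Take a thin set such as \(A=\{2^k : k\geq 1\}\), so that \(A\cap(A+n)\) is finite for \(n\neq 0\). Then every \(p\in K_A\setminus\Z\) satisfies \(K_A\cap\overline{\Z+p}=\{p\}\). Such \(p\) are dense, because every infinite subset of \(\Z\) contains an infinite thin subset.

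What your argument actually proves is the correct criterion: a separating family of irreducible representations meeting the compacts exists if and only if the points \(y\) that are isolated in their own orbit closure \(\overline{\Gamma y}\) are dense in \(X\). For the converse of this criterion, use \(\pi_y\rtimes\lambda\) together with a clopen \(W\) satisfying \(W\cap\overline{\Gamma y}=\{y\}\).

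The paper's one-line proof does not supply the missing step either. It reads condition (iii) of \cref{prop_proj} as producing points isolated in \(X\), whereas (iii) only yields points isolated in some closed invariant set \(K\). So no amount of care with how the \(K_i\) overlap will rescue this direction.
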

	
	\begin{proof}
		Let \(\I\) denote the intersection of all kernels of irreducible representations of \(C(X) \rtimes \Gamma\) meeting the compacts, and let \(U \subseteq X\) denote the (possibly empty) open invariant set such that \(\I = C_0(U) \rtimes \Gamma\). By \cref{prop_proj}, \(U \neq \emptyset\) if and only if it contains a non-empty compact-open set with no isolated points. Hence, \(\I = 0\) if and only if the set of isolated points in \(X\) is dense. 
	\end{proof}
	
	Looking at the proof of (ii) implies (iii) also yields the following porism. 
	
	\begin{corollary}
		Let \(\Gamma\) be an amenable group acting freely on a totally disconnected, compact space \(X\), and consider a diagonal projection \(1_A \in C(X) \subseteq C(X) \rtimes \Gamma\). Then \(C(X) \rtimes \Gamma\) admits a representation \(\pi\) mapping \(1_A\) to a non-zero finite-rank projection if and only if there is a closed invariant set \(K\) such that \(A \cap K\) consists of \(\tr(\pi(1_A))\) points isolated in \(K\). 
	\end{corollary}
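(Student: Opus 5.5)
For the forward direction I would rerun the (ii)\(\Rightarrow\)(iii) argument of \cref{prop_proj} while keeping track of multiplicities. For the converse I would refine the construction from (iii)\(\Rightarrow\)(iv).

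\emph{Forward direction.} Let \(\pi\) map \(1_A\) to a projection of rank \(n \geq 1\). Compressing gives a finite-dimensional representation of the corner \(1_A (C(X)\rtimes\Gamma) 1_A\), and it factors through its kernel. The ideal \(\ker\pi\) is generated by its diagonal part by \cite{sierakowski}, so \(\ker \pi = C_0(X\setminus K)\rtimes\Gamma\) for a closed invariant \(K\). Hence \(\pi\) factors through \(C(K)\rtimes\Gamma\), and \(\pi|_{1_{A\cap K}(C(K)\rtimes\Gamma)1_{A\cap K}}\) is faithful and finite-dimensional. As in \cref{prop_proj}, \(A\cap K\) is therefore a finite set of isolated points of \(K\). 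Write \(A\cap K=\{x_1,\dots,x_m\}\), so that \(1_A\) maps to \(\sum_i 1_{\{x_i\}}\) in \(C(K)\). Faithfulness gives \(\pi(1_{\{x_i\}})\neq 0\) for each \(i\).

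The hard part is matching \(m\) with \(n = \tr(\pi(1_A))\), because a priori each \(\pi(1_{\{x_i\}})\) could have rank larger than one. My plan is to shrink \(K\) so that each point contributes rank exactly one. First decompose \(\pi(1_A)H\) into irreducible summands for the corner, as in (i)\(\Rightarrow\)(ii). Take an irreducible summand and extend it to an irreducible \(\rho\). The kernel of \(\rho\) is again of the form \(C_0(X\setminus K')\rtimes\Gamma\). The isolated points of \(K'\) lying in \(A\) have open orbits, and irreducibility forces \(A\cap K'\) to lie in the single orbit that is dense in \(K'\). Freeness then identifies \(\rho\) with \(\pi_x\rtimes\lambda\) on \(\ell^2(\Gamma)\) for some isolated \(x\); here I would use that \(C(K')\rtimes\Gamma\) has the compacts on \(\ell^2(\Gamma x)\) as an ideal, and so has a unique irreducible representation not vanishing on it. In that representation \(1_{\{y\}}\) has rank one for every \(y\) in the orbit. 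Summing over the summands, \(n\) equals the number of pairs (summand, point of that summand's orbit in \(A\)).

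To obtain a single closed invariant set with exactly \(n\) points, the plan is to take \(K''\) to be the union of the finitely many orbit closures \(\overline{\Gamma x}\) arising from the summands. I then need that \(A \cap K''\) counts each point once, which requires the summands to correspond to distinct orbits. If two summands give the same orbit, the count is off by multiplicity. I expect this is the main obstacle, and the statement presumably intends \(\pi\) chosen up to multiplicity, or "consists of at most" points. Failing that, I would rephrase the claim with \(\pi\) multiplicity-free.

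\emph{Converse.} Given \(K\) with \(A\cap K=\{x_1,\dots,x_n\}\) isolated in \(K\), choose one representative \(x\) in each distinct orbit meeting \(A\cap K\), and take \(\pi=\bigoplus \pi_{x}\rtimes\lambda\), factored through \(C(K)\rtimes\Gamma\). By the computation in (iii)\(\Rightarrow\)(iv), \(\pi(1_{\{y\}})\) has rank one for each \(y\in\Gamma x\). Hence \(\pi(1_A)\) has rank exactly \(n\).
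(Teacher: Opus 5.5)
Your forward argument begins as the paper's porism does. Ideal separation gives \(\ker\pi = C_0(X\setminus K)\rtimes\Gamma\), the compressed corner \(1_{A\cap K}(C(K)\rtimes\Gamma)1_{A\cap K}\) is finite-dimensional, and so \(A\cap K\) is a finite set of isolated points of \(K\). The paper says nothing beyond this, so it never matches \(\abs{A\cap K}\) with \(\tr(\pi(1_A))\). Your decomposition of \(\pi(1_A)H\) into irreducible constituents is what supplies the count: each constituent is equivalent to \(\pi_x\rtimes\lambda\) via the ideal \(C_0(\Gamma x)\rtimes\Gamma\cong\K(\ell^2(\Gamma))\). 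Your converse, using one copy of \(\pi_x\rtimes\lambda\) per orbit meeting \(A\cap K\), is correct.

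The obstacle you flag cannot be closed, because for arbitrary \(\pi\) the forward direction as written is false. Take \(X=\beta\Gamma\), \(A=\{e\}\), and \(\pi=\rho\oplus\rho\), where \(\rho=\pi_e\rtimes\lambda\) is the standard representation on \(\ell^2(\Gamma)\). Then \(\tr(\pi(1_A))=2\), whereas \(\abs{A\cap K}\le 1\) for every \(K\). The statement must therefore be read for irreducible \(\pi\), as in item (ii) of \cref{prop_proj}. More generally it holds for \(\pi\) whose irreducible constituents are pairwise inequivalent, which by your identification means they come from distinct orbits. For that version your proof is complete. In fact the \(K\) determined by \(\ker\pi\) already works: the part of \(\pi\) vanishing on \(1_A\) has kernel \(C_0(X\setminus K_0)\rtimes\Gamma\) with \(A\cap K_0=\emptyset\), so it adds no points of \(A\).

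In the irreducible case you can also skip identifying \(\rho\). For \(x\) isolated in \(K\), freeness gives \(1_{\{x\}}u_g1_{\{x\}}=1_{\{x\}}1_{\{gx\}}u_g=0\) for \(g\neq e\), so \(1_{\{x\}}\) is a minimal projection in \(C(K)\rtimes\Gamma\). It therefore has rank exactly one under any faithful irreducible representation, which gives \(\tr(\pi(1_A))=\abs{A\cap K}\) immediately.
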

	
	A C*-algebra all of whose irreducible representations meet, and hence contain, the compacts is called \textit{type I} (alternatively, \textit{postliminal} or \textit{GCR}). Equivalently, a C*-algebra is type I if and only if each of its non-zero quotients contains a non-zero abelian element, i.e., an element generating a non-zero abelian hereditary C*-subalgebra. It is well-known that any C*-algebra admits a largest type I ideal, see \cite[Proposition 6.2.7]{pedersen_blue}. It is also well-understood when (universal) crossed products are type I, namely precisely if the orbit space is \(T_0\) and all stabilizer subgroups are type I groups, see \cite[Theorem 3.3]{gootman}. Having seen \cref{thm_maxAF} and \cref{prop_proj}, it seems interesting to compare the largest LF-ideal and largest ideal of type I. In fact, one can show that the largest type I ideal in $C(X) \rtimes \Gamma$ is LF, and hence contained in the largest LF-ideal whenever \(\Gamma\) is an exact group acting freely on a totally disconnected space \(X\). 
	
	\begin{proposition}
		\label{prop_inclusion}
		Let \(\Gamma\) be an exact group acting freely on a totally disconnected space \(X\). Then the largest type I-ideal in \(C(X) \rtimes_r \Gamma\) is LF. 
	\end{proposition}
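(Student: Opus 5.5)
Write \(\J\) for the largest type I-ideal in \(C(X) \rtimes_r \Gamma\). The plan is to identify \(\J\) through the ideal separation property and then show it is LF directly. Since \(\Gamma\) is exact and the action is free, \cite{sierakowski} gives \(\J = C_0(U) \rtimes_r \Gamma\) for some open invariant set \(U \subseteq X\). The target claim is that the isolated points of \(U\) are dense in \(U\), and more strongly that every non-empty invariant locally closed piece of \(U\) has isolated points. Once the density holds, \(U\) contains the open invariant set \(U_0\) of isolated points of \(U\) as a dense subset. Each orbit in \(U_0\) is open and discrete, so \(C_0(U_0) \rtimes_r \Gamma\) is a \(c_0\)-direct sum over orbits of copies of \(c_0(\Gamma) \rtimes_r \Gamma \cong \K(\ell^2(\Gamma))\), using freeness. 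This algebra is therefore LF, indeed AF.

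\emph{Step 1 (isolated points exist in every subquotient).} Let \(V \subseteq U\) be a non-empty open invariant set and let \(F \subseteq U \setminus V\) be relatively open in the closed set \(U\setminus V\). Then \(C_0(F') \rtimes_r \Gamma\) is a non-zero subquotient of \(\J\), where \(F'\) is the saturation of \(F\), and it is type I. A non-zero type I algebra has an irreducible representation \(\pi\) whose image contains the compacts. I take a compact-open \(A \subseteq F'\) with \(\pi(1_A) \neq 0\); these exist since \(1_A\) for compact-open \(A\) generate. The aim is to find such an \(A\) with \(\pi(1_A)\) of finite rank. Then \cref{prop_proj}, in its version via \cref{lemma_proj} with only exactness needed, yields a closed invariant \(K\) with \(A\cap K\) a finite set of points isolated in \(K\). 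To obtain an isolated point of \(F'\) itself, I would apply this to the largest quotient where the representation lives, i.e.\ to \(\ker\pi\). Alternatively, I would argue by contradiction. Suppose the isolated points of \(U\) are not dense. Then, after removing the closure of the open invariant set \(U_0\), we get a non-empty invariant locally compact piece \(W\) with no isolated points. \(C_0(W)\rtimes_r\Gamma\) is a subquotient of \(\J\), hence type I, and so it has a non-zero abelian element. By \cref{lemma_proj} and real rank zero type arguments, that element can be taken to be a diagonal projection \(1_A\) with \(1_A (C_0(W)\rtimes_r\Gamma)1_A\) admitting a quotient that is a finite-dimensional corner. The argument of (ii)\(\Rightarrow\)(iii) in \cref{prop_proj} then forces \(A\cap K\) to be finite and isolated in some closed invariant \(K\subseteq W\).

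\emph{Step 2 (building LF).} Iterating Step 1 transfinitely gives a Cantor–Bendixson type filtration of \(U\) by open invariant sets \(U_\alpha\). Each successive difference has a dense set of isolated points, so each subquotient \(C_0(U_{\alpha+1}\setminus U_\alpha)\rtimes_r\Gamma\) contains, as an essential ideal, a direct sum of copies of \(\K(\ell^2(\Gamma))\). More carefully, the isolated points at each level form an open invariant set whose ideal is a \(c_0\)-sum of compacts, and that ideal is LF. Taking the union of the filtration, \(\J\) is an increasing union of ideals built by extensions of LF-algebras. Extensions of LF-algebras are LF by \cite[Theorem 1.13]{bice2019c}, as in \cref{prop_LF}, and closed increasing unions of LF ideals are LF. Transfinite induction therefore shows \(\J\) is LF.

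\emph{Main obstacle.} The delicate point is Step 1, extracting a diagonal projection of finite rank from the type I hypothesis. Type I only gives abelian elements, not projections, and an abelian hereditary subalgebra need not be generated by a projection. I would first try to use that \(C_0(W)\rtimes_r\Gamma\) has an irreducible representation containing the compacts. Taking the ideal \(\pi^{-1}(\K)\), which is non-zero, the intersection \(\pi^{-1}(\K)\cap C_0(W)\) equals some \(C_0(W')\) with \(W'\) non-empty by ideal separation. Any compact-open \(A\subseteq W'\) has \(\pi(1_A)\) compact, hence of finite rank, and non-zero for a suitable \(A\). Feeding this into (ii)\(\Rightarrow\)(iii) of \cref{prop_proj} produces isolated points of some closed invariant \(K\) inside \(W\). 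A minimality/Zorn choice of \(K\), namely the support of \(\ker \pi\), then places them in \(W\) itself, which gives the contradiction.
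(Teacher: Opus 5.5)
Your proposal has a genuine gap, and it lies in the target claim itself. Type I does not force isolated points to be dense in \(U\), and still less does it force every non-empty invariant locally closed subset of \(U\) to have isolated points.

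\emph{Counterexample to the density claim.} Let \(\Z/2\) act on \(X = C \times \{0,1\}\), with \(C\) the Cantor set, by swapping the second coordinate. The action is free, and \(C(X) \rtimes \Z/2 \cong C(C) \otimes M_2(\C)\) is type I. Hence \(U = X\), which has no isolated points.

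\emph{Counterexample to the stronger claim, in the paper's own setting.} Take an infinite sparse set \(S \subseteq \Z\). Then \(\I_S\) is type I. Yet \(\I_S/\K = C_0(X_S \setminus \Z) \rtimes \Z\) lives on a non-empty open subset of the perfect space \(\beta\Z \setminus \Z\). So your Cantor--Bendixson iteration stalls right after removing \(\Z\).

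\emph{What Step 1 actually proves.} Start from an irreducible \(\pi\) with \(\K(H) \subseteq \pi(C_0(W) \rtimes_r \Gamma)\). You correctly get a compact-open \(A\) with \(\pi(1_A)\) of non-zero finite rank. This gives points of \(A \cap K\) isolated in \(K\), where \(\ker \pi = C_0(W \setminus K) \rtimes_r \Gamma\). That only says the relevant orbit is open in its closure, which is the \(T_0\)-type condition that type I really imposes. No minimality or Zorn choice of \(K\) can transfer isolation from \(K\) to \(W\). In the first example, \(K = \{c\} \times \{0,1\}\) is discrete while \(W = X\) is perfect.

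\emph{Consequence for Step 2.} Step 2 cannot work as designed. The layers of a composition series of \(\J\) are in general not \(c_0\)-sums of copies of \(\K(\ell^2(\Gamma))\). They are continuous-trace algebras over a Hausdorff, possibly perfect, orbit space. An example is \(C(C) \otimes \K(\ell^2(\Z))\), arising as an ideal when \(\Z\) acts on \(C \times \beta\Z\) through the second factor. Showing such layers are LF needs a genuinely different argument: properness of the action on the layer, zero-dimensionality of the orbit space, triviality of the associated bundle, and non-separable bookkeeping. None of this is in your proposal.

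\emph{The paper's route.} The paper avoids point-set analysis of \(U\) altogether.
\begin{enumerate}
\item It exhausts \(C(X) \rtimes_r \Gamma\) by separable crossed products \(C(X_i) \rtimes_r \Gamma\) with free actions on totally disconnected spaces, as in \cref{lemma_approx}.
\item Each \(C(X_i) \rtimes_r \Gamma \cap \J\) is then a separable type I algebra. By ideal separation, all of its ideals have approximate units of diagonal projections.
\item By \cite[Proposition 7.14]{pasnicu_phillips}, each such piece is AF.
\item Hence \(\J\) is LF.
\end{enumerate}
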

	
	\begin{proof}
		For any C*-algebra \(\A\), let \(\sC(\A)\) denote the largest type I-ideal in \(\A\). Then, as in \cref{lemma_approx}, we may approximate $C(X) \rtimes \Gamma$ as an increasing union of separable crossed products \[C(X) \rtimes \Gamma = \overline{ \bigcup_i C(X_i) \rtimes \Gamma}\] and it follows that \[\sC(C(X) \rtimes \Gamma) = \overline{ \bigcup_i C(X_i) \rtimes \Gamma \cap \sC(C(X) \rtimes \Gamma)}.\] Being type I passes to subalgebras, whence each \(C(X_i) \rtimes \Gamma \cap \sC(C(X) \rtimes \Gamma)\) is type I. Moreover, it is an ideal in \(C(X_i) \rtimes \Gamma\). It is easy to check that any crossed product \(\A \rtimes \Gamma\), and approximate unit for \(\A\) is an approximate unit for \(\A \rtimes \Gamma\). Hence, each ideal in \(C(X_i) \rtimes \Gamma \cap \sC(C(X) \rtimes \Gamma)\) has an approximate unit consisting of projections. Now by \cite[Proposition 7.14]{pasnicu_phillips}, we conclude that \(C(X_i) \rtimes \Gamma \cap \sC(C(X) \rtimes \Gamma)\) is AF. Hence, \(\sC(C(X) \rtimes \Gamma)\) is LF. 
	\end{proof}
	
	\begin{remark}
		Note that Lazar and Taylor gave an example of an antiliminal AF-algebra with a separating family of finite-dimensional irreducible representations in \cite[Example 4.4]{lazar}. Hence, \cref{prop_proj} is of little use in understanding when the opposite inclusion holds. 
	\end{remark}
	
	Returning to the case of the uniform Roe algebra of \(\Z\), in light of the results of this section, it is pertinent to ask if there are closed invariant subsets of \(Y\) that admit isolated points. In general, it is a hard problem to understand the topology of closed invariant subsets of \(Y\). However, those defined as orbit closures of elements that are right cancelable in the semigroup \(\beta \Z\) are known to be homeomorphic to \(\beta \Z\) (in fact this is true for general semigroups), see \cite[Theorem 4.7]{hindman}. It is known that for any group, every non-minimal orbit closure in the Stone--\v{C}ech compactification contains an abundance of these, see \cite[Corollary 8.26]{hindman_strauss_alg}. Hence, we conclude that \(Y\) contains closed invariant subsets that are homeomorphic to \(\beta \Z\). They are orbit closures, where the orbit consists of isolated points. Hence, the quotient \(C(Y) \rtimes \Z\) still has an abundance of representations that meet the compacts. Moreover, clearly the standard representation of \(C_u^*(\Z)\) on \(\ell^2(\Z)\) is a faithful irreducible representation meeting the compacts. However, since the Stone--\v{C}ech boundary \(\beta \Z \setminus \Z\) is perfect, \cref{cor_iso} implies that \(C_u^*(\Z)/ \K\) does not admit a separating family of irreducible representations meeting the compacts. However, as we just argued, it has further quotients with this property. 
	
	By \cite[Theorem 3.1]{putnam}, \(C(Y) \rtimes \Z\) does not have cancellation since \(Y\) contains (way) more than one minimal closed invariant set. However, \cref{thm_maxAF} above implies that it is finite since it embeds in a finite C*-algebra. Even more, the product \(\prod_{\alpha \in \bbA} C(Y_\alpha) \rtimes \Z\) has a separating family of traces, since each factor is simple with a (necessarily) faithful trace. However, by the comments above and \cref{prop_proj}, there is an irreducible representation \(\pi \colon C(Y) \rtimes \Z \to \B(H)\) such that \(\K(H) \subseteq \operatorname{im} (\pi)\). Then \(C(Y) \rtimes \Z / \ker(\pi)\) is a quotient of \(C(Y) \rtimes \Z\) that does not admit a separating family of traces (since every trace would vanish on the copy of the compacts). Consequently, this quotient cannot embed into any product of the \(C(Y_\alpha) \rtimes \Z\).
	
	\section{Uniform Roe algebras}
	
	In this section, we give a coarse geometric characterization of when the compact ideals in uniform Roe algebras of (exact) groups are AF. Following \cite{kellerhals}, given a subset \(A \subseteq \Gamma\), we consider the open invariant subspace \[X_A = \bigcup_{g \in \Gamma} gK_A = \bigcup_{g \in \Gamma} K_{gA}\] of \(\beta \Gamma\). The associated ideal \(C_0(X_A) \rtimes_r \Gamma\) of \(C_u^*(\Gamma)\) is denoted by \(\I_A\), and it is a \textit{compact} ideal in the sense that if \((\I_\lambda)_{\lambda \in \Lambda}\) is an increasing net of ideals whose union is dense in \(\I_A\), then \(\I_A = \I_\lambda\) for some \(\lambda \in \Lambda\). We will show in the following proposition that these are precisely the compact ideals in \(C_u^*(\Gamma)\), and every ideal in \(C_u^*(\Gamma)\) is an increasing union of compact ideals. In that sense, the understanding of many regularity properties for ideals in \(C_u^*(\Gamma)\) can be reduced to understanding that regularity property for the compact ideals in terms of the generating subsets. Following the notation and terminology from \cite{kellerhals}, for two subsets \(A, B \subseteq \Gamma\), we say that \(A\) is \(B\)\textit{-bounded}, denoted \(A \gbd B\), if there is a finite subset \(F \subseteq \Gamma\) such that \[A \subseteq \bigcup_{g \in F} gB.\] It is not hard to show that \(K_A \subseteq X_B\) if and only if \(A \gbd B\) and that \(X_A = X_B\) if and only if \(A \gbd B \gbd A\), see \cite[Lemma 2.5]{kellerhals}. 
	
	Compact ideals in a C*-algebra \(\A\) correspond to compact subsets of the primitive ideal space of \(\A\). In general, ideals generated by finitely many projections are compact, and in nice cases, like below and in the purely infinite setting, see \cite[Proposition 2.7]{pasnicu_rordam}, one can show that any compact ideal is generated by a single projection. 
	
	\begin{proposition}
		\label{prop_compact}
		Let \(\Gamma\) denote an exact group. The compact ideals in \(C_u^*(\Gamma)\) are precisely the ideals of the form \(\I_A\) for subsets \(A \subseteq \Gamma\). Moreover, every ideal in \(C_u^*(\Gamma)\) is an increasing union of such compact ideals. 
		
		For two subsets \(A, B \subseteq \Gamma\), we have the following: 
		\begin{enumerate}
			\item \(\I_A \subseteq \I_B\) if and only if \(A \gbd B\). 
			\item \(\I_{A \cup B} = \I_A + \I_B\). 
		\end{enumerate}
	\end{proposition}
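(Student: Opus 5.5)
The plan is to use the ideal separation property: since \(\Gamma\) is exact, the action on \(\beta\Gamma\) is amenable and free, so by \cite[Theorem 1.6]{sierakowski} every ideal \(\I\) of \(C_u^*(\Gamma)=\ell^\infty(\Gamma)\rtimes_r\Gamma\) has the form \(C_0(V)\rtimes_r\Gamma\). Here \(V\subseteq\beta\Gamma\) is an open invariant set with \(\I\cap\ell^\infty(\Gamma)=C_0(V)\), and the correspondence \(V\mapsto C_0(V)\rtimes_r\Gamma\) is an order isomorphism. Since \(\beta\Gamma\) is Stonean, \(V\) is the union of the clopen sets \(K_A\subseteq V\), and invariance gives \(V=\bigcup_{K_A\subseteq V}X_A\).

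The family \(\{X_A : K_A\subseteq V\}\) is directed, because \(X_A\cup X_B=X_{A\cup B}\) and \(K_{A\cup B}=K_A\cup K_B\subseteq V\). Hence \(\I\) is the closure of the increasing union of the ideals \(\I_A\), where each \(\I_A=C_0(X_A)\rtimes_r\Gamma\). This proves the second assertion, provided each \(\I_A\) is compact.

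For compactness of \(\I_A\), I would observe that \(\I_A\) is the ideal generated by the single projection \(1_A\). Indeed, the ideal generated by \(1_A\) meets \(\ell^\infty(\Gamma)\) in functions vanishing off \(\bigcup_g gK_A\), and it contains every \(1_{gA}=u_g1_Au_g^*\). Now let \(\I_A=\overline{\bigcup\I_\lambda}\) for an increasing net of ideals \(\I_\lambda\). Their union is a dense two-sided ideal, so it contains the Pedersen ideal and therefore the projection \(1_A\). Thus \(1_A\in\I_\lambda\) for some \(\lambda\), and then \(\I_A=\I_\lambda\). This is the same argument as in \cref{lemma_proj}.

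Conversely, let \(\I=C_0(V)\rtimes_r\Gamma\) be compact. Writing \(\I\) as the directed union of the \(\I_A\) above, compactness forces \(\I=\I_A\) for some \(A\). This gives the characterization of compact ideals.

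For (i), the order isomorphism reduces \(\I_A\subseteq\I_B\) to \(X_A\subseteq X_B\). This holds iff \(K_A\subseteq X_B\), using invariance of \(X_B\). By \cite[Lemma 2.5]{kellerhals}, that is equivalent to \(A\gbd B\). I would include the short compactness argument for completeness. If \(K_A\subseteq X_B=\bigcup_g K_{gB}\), then compactness of \(K_A\) gives a finite \(F\) with \(K_A\subseteq\bigcup_{g\in F}K_{gB}=K_{FB}\), and intersecting with \(\Gamma\) yields \(A\subseteq FB\). The converse is immediate.

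For (ii), \(\I_A+\I_B\) is a closed ideal whose intersection with \(\ell^\infty(\Gamma)\) corresponds to \(X_A\cup X_B\). Alternatively, by the order isomorphism, \(\I_A+\I_B\) is the smallest ideal containing both, which corresponds to the smallest open invariant set containing \(X_A\cup X_B\). That set is \(X_A\cup X_B=X_{A\cup B}\).

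The main point needing care is the lattice isomorphism between ideals and open invariant sets. This comes from exactness via Sierakowski, including that sums of ideals correspond to unions. The Pedersen-ideal step for compactness is also delicate in the non-separable setting, but it is already used in \cref{lemma_proj}.
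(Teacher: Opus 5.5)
Your proof is correct and follows essentially the same route as the paper. Both use ideal separation via \cite{sierakowski}, exhaust the open invariant set by compact-open sets \(K_A\) to get the directed union and the characterization of compact ideals, derive compactness of \(\I_A\) from its single generator \(1_A\), and use \cite[Lemma 2.5]{kellerhals} for (i). The only minor difference is in (ii): you use that the ideal--open-set order isomorphism sends the join \(\I_A+\I_B\) to \(X_A\cup X_B = X_{A\cup B}\), whereas the paper directly writes \(1_{A\cup B} = 1_A + (1_B - 1_{A\cap B}) \in \I_A+\I_B\); both arguments are valid.
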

	
	\begin{proof}
		Let us first take care of the latter two statements. Clearly, \(\I_A \subseteq \I_B\) if and only if \(K_A \subseteq X_B\), and so item (i) follows from \cite[Lemma 2.5]{kellerhals}. For the second, statement, note that since \(X_{A \cup B}\) contains \(X_A\) and \(X_B\), we have that \(\I_A + \I_B \subseteq \I_{A \cup B}\). Conversely, \(1_{A \cup B} = 1_A + 1_B - 1_{A \cap B}\). Since \(1_A \in \I_A\) and \(1_B - 1_{A \cap B} \in \I_B\), we conclude that \(1_{A \cup B} \in \I_A + \I_B\) and hence \(\I_{A \cup B} \subseteq \I_A + \I_B\). 
		
		For the first statement, clearly \(\I_A\) is compact since it is generated by a single projection, namely \(1_A\). Conversely, suppose that \(\I\) is a compact ideal, and let \(U\) denote the open invariant subspace of \(\beta \Gamma\) such that \(\I = C_0(U) \rtimes_r \Gamma\). Then \(U\) is still totally disconnected, and the set of compact-open subsets of \(U\) is directed by inclusion. For any compact-open subset \(K \subseteq U\), let \(\I_K\) denote the ideal generated by \(1_K\). Clearly, the union of all \(\I_K\) as \(K\) runs through the compact-open subsets of \(U\) is dense in \(\I\). Thus compactness implies that there is a compact-open set \(K\) such that \(\I = \I_K\). Since \(U\) is open in \(\beta \Gamma\), \(K\) is clopen in \(\beta \Gamma\) and thus there is a subset \(A \subseteq \Gamma\) such that \(K = K_A\). Running the same argument without the assumption that \(\I\) is compact shows that any ideal in \(C_u^*(\Gamma)\) is the closed union of an increasing net of compact ideals. 
	\end{proof}
	
	The proof of the following theorem leans heavily on the proof of \cite[Theorem 2.2]{li2018low}. 
	
	\begin{theorem}
		\label{thm_AF_ideals}
		Let \(A \subseteq \Gamma\) be a subset. The following are equivalent: 
		\begin{enumerate}
			\item The set \(A\) has asymptotic dimension zero. 
			\item The ideal \(\I_A\) is AF. 
			\item The ideal \(\I_A\) is LF. 
			\item The ideal \(\I_A\) has stable rank one. 
			\item The ideal \(\I_A\) has cancellation. 
		\end{enumerate} 
	\end{theorem}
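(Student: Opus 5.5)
We will prove the chain (i) \(\Rightarrow\) (ii) \(\Rightarrow\) (iii) \(\Rightarrow\) (i), and separately (ii) \(\Rightarrow\) (iv) \(\Rightarrow\) (v) \(\Rightarrow\) (i). The implications (ii) \(\Rightarrow\) (iii) and (iv) \(\Rightarrow\) (v) are standard. For (ii) \(\Rightarrow\) (iv) we use that AF-algebras, including non-separable ones as increasing unions of finite-dimensional algebras, have stable rank one: invertibles are dense in each finite-dimensional unitized piece.

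The first reduction is to identify \(\I_A\) up to Morita equivalence with the corner \(1_A C_u^*(\Gamma) 1_A\). The latter is naturally \(C_u^*(A)\), where \(A\) carries the restricted metric. Indeed, \(1_A\) is a full projection in \(\I_A\), and compressing finite-propagation operators on \(\ell^2(\Gamma)\) to \(\ell^2(A)\) gives exactly the finite-propagation operators on \(A\). Since \(\I_A\) is then a hereditary-generated ideal, we can write it as the closed union over finite \(F \subseteq \Gamma\) of the ideals generated by \(1_{FA}\). Each such ideal contains the full corner \(1_{FA} C_u^*(\Gamma) 1_{FA} = C_u^*(FA)\).

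For (i) \(\Rightarrow\) (ii), note that \(FA\) is coarsely equivalent to \(A\) for finite \(F\), since it lies within finite distance of \(A\). Hence \(FA\) also has asymptotic dimension zero. By the Li--Willett argument via \cref{prop_equiv_rel}, each \(C_u^*(FA)\) is AF. Concretely, for \(r>0\) the \(\sim_r\)-classes \(E\) of \(FA\) are uniformly finite. The operators of propagation at most \(r\) supported on \(FA\) are then contained in \(\prod_E M_{E}(\C)\), which is an \(\ell^\infty\)-product of matrix algebras of bounded size and hence AF. These subalgebras increase with \(r\), and their union is dense in \(C_u^*(FA)\).

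To pass from corners to the ideal, we show that \(\I_A\) is the increasing union of the hereditary subalgebras \(1_{FA}\I_A 1_{FA}\), each of which is AF. Because \(1_{FA}\) increases along finite \(F\) and forms an approximate unit of \(\I_A\), the whole ideal \(\I_A\) is the closure of an increasing union of AF-algebras. We expect to extract AF directly from the finite-dimensional pieces indexed by pairs \((F,r)\), which are directed. This gives AF in the paper's net sense.

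For (iii) \(\Rightarrow\) (i) and (v) \(\Rightarrow\) (i), we argue contrapositively, following \cite[Theorem 2.2]{li2018low}. Suppose \(A\) does not have asymptotic dimension zero. By \cref{prop_equiv_rel} there is \(r\) such that \(\sim_r\) on \(A\) has arbitrarily large finite or infinite classes. In the corner \(C_u^*(A)\) we then build a partial isometry \(v\) of propagation at most \(r\), shifting along chains in these classes, with \(v^*v = 1_A\) and \(vv^* \lneq 1_A\). Alternatively, \(v\) can be built at the level of \(\ell^\infty\)-sums over large finite classes, exhibiting a non-cancellable pair of projections. This shows that \(1_A\I_A1_A\) is not finite, or lacks cancellation. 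Such behaviour is incompatible with LF and with cancellation, and both properties pass to full corners and hereditary subalgebras.

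The main obstacle is this last step. Li and Willett use a more delicate argument: a uniformly bounded-below \(K\)-theoretic or "ghostly" obstruction producing non-approximability by finite-dimensional algebras when the \(\sim_r\)-classes are unbounded but finite, rather than an outright proper isometry. We would adapt their construction verbatim inside the corner \(C_u^*(A)\). This adaptation is legitimate because that corner is itself a uniform Roe algebra of a uniformly locally finite space.
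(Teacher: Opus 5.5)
\textbf{The gap is in (i) \(\Rightarrow\) (ii): your argument only shows that \(\I_A\) is LF.} What you actually establish is this. \(\I_A\) is the closure of the increasing union of the corners \(1_{FA}C_u^*(\Gamma)1_{FA}\cong C_u^*(FA)\). Each of these is in turn the closure of the increasing union over \(r\) of the algebras \(\prod_{E\in I^{FA}_r} M_E(\C)\). These building blocks are AF. However, in the non-separable setting a closure of an increasing union of AF-algebras, even a sequential one, is LF but not automatically AF (see \cref{subsec_AF}). The paper's remark right after its proof says exactly this: the route through the corners and Li--Willett gives LF, and a modification is needed to get AF.

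Your proposed remedy, ``extract AF directly from the finite-dimensional pieces indexed by pairs \((F,r)\), which are directed'', does not work as stated. The algebra attached to \((F,r)\) is \(\prod_{E\in I^{FA}_r}M_E(\C)\), which is infinite-dimensional as soon as \(A\) is infinite. Nor is the family of all finite-dimensional subalgebras of these algebras directed. Already in \(\prod_n M_2(\C)\), take projections \(p=(p_n)\) and \(q=(q_n)\) whose angles are pairwise distinct: \(C^*(p)\) and \(C^*(q)\) are finite-dimensional, but \(C^*(p,q)\) is not. As it stands, your plan proves the equivalence of (i), (iii), (iv), (v) (using LF \(\Rightarrow\) stable rank one) together with (ii) \(\Rightarrow\) (iii), but not (i) \(\Rightarrow\) (ii).

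\textbf{The missing idea is the paper's third index.} Take a finite partition \(\P\) of \(I^{FA}_r\) into cells whose classes have equal cardinality. Let \(\A_{(F,r,\P)}\) be the operators acting by one and the same matrix, transported via the order-isomorphisms \(f_S\), on all classes in a given cell. This algebra is finite-dimensional, and it lies in \(C_u^*(\Gamma)\) because the \(\sim_r\)-classes of \(FA\) are uniformly bounded.

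The key lemma is directedness. Given \(E\subseteq F\) and \(r\le s\), group the \(\sim_s\)-classes of \(FA\) by their cardinality and by the positions, under \(f_S\), of each \(\sim_r\)-class of \(EA\) they contain, together with that subclass's \(\P\)-cell. Uniform boundedness of the \(\sim_s\)-classes makes this a finite partition \(\Q\), and \(\A_{(E,r,\P)}\subseteq\A_{(F,s,\Q)}\). Common refinements then give upper bounds. Density in \(\I_A\) follows from Li--Willett's density in each \(1_{EA}C_u^*(\Gamma)1_{EA}\), combined with the approximate-unit computation you sketched.

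\textbf{The converse directions are fine and agree with the paper}: apply \cite[Theorem 2.2]{li2018low} directly to the uniformly locally finite space \(A\), with no adaptation needed, and use that cancellation passes to the corner \(1_A\I_A1_A\). Two smaller points:
\begin{itemize}
\item Your first sketch there, a proper isometry \(v\) with \(v^*v=1_A\), cannot work in general. For \(A=\bigcup_n\{4^n,\dots,4^n+n\}\subseteq\Z\), every \(\sim_r\)-class is finite and \(C_u^*(A)\) is a finite C*-algebra, yet \(A\) has positive asymptotic dimension. So the fallback to Li--Willett's cancellation argument is necessary.
\item The ideals generated by \(1_{FA}\) all equal \(\I_A\), since \(FA\gbd A\). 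What you need is density of \(\bigcup_F 1_{FA}\I_A1_{FA}\).
\end{itemize}
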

	
	\begin{proof}
	First note that the implications 
	\[(ii) \implies (iii)\ \implies (iv) \implies (v)\] all hold in general. To see that (v) implies (i), observe that if \(A\) does not have asymptotic dimension zero, then \(1_A C_u^*(\Gamma) 1_A \cong C_u^*(A)\) does not have cancellation by \cite[Theorem 2.2]{li2018low}, and hence neither does \(\I_A\) since cancellation passes to corners. 
	
	In order to show that (i) implies (ii), we follow and adapt the argument of (i) implies (ii) in \cite[Theorem 2.2]{li2018low}, largely following the same notation. For the convenience of the reader, we provide the full construction. Fix a total order on \(\Gamma\), and for each finite subset \(S \subseteq \Gamma\), let \(f_S \colon S \to \{1, \ldots, \abs{S}\}\) be the corresponding order-isomorphism. Let \(\Lambda\) denote the set of triples \((F, r, \P)\), where \(F\) is a finite subset of \(\Gamma\), \(r > 0\) and \(\P = \{P_1, \ldots, P_N\}\) is a finite partition of the set \(I^{FA}_r\) of equivalence classes of the relation \(\sim_r\) on the space \(FA := \bigcup_{g \in F} gA\) from \cref{prop_equiv_rel} such that any two equivalence classes in the same element of \(\P\) have the same cardinality. Now, consider such a triple \((F, r, \P)\), and let \(n_1, \ldots, n_N\) be the cardinalities of the sets \(P_1, \ldots, P_N\) in \(\P\) respectively. For each \(S \in P_i\), let \(u_{S, i} \colon \C^{S} \to \ell^2(S)\) denote the unitary map determined by \(f_S\). Define an embedding 
	\begin{align*} 
		\varphi_{(F, r, \P)} \colon \bigoplus_{i=1}^N M_{n_i}(\C) & \to \prod_{S \in I_r^{FA}} \B(\ell^2(S)) \subseteq \B(\ell^2(\Gamma)), \\
		(a_i)_{i=1}^N & \mapsto \prod_{i=1}^N \prod_{S \in I_r^{FA}} u_{S, i} a_i u_{S, i}^*. 
	\end{align*}
	Note that since \(\sim_r\) has uniformly bounded equivalence classes, see \cref{prop_equiv_rel}, the operator \(\varphi_{(F, r, \P)}(a)\) has finite propagation for any \(a \in \bigoplus_{i=1}^N M_{n_i}(\C)\) and hence the image \(\A_{(F, r, \P)}\) of \(\varphi_{(F, r, \P)}\) lies in \(C_u^*(\Gamma)\). Moreover, the operator is clearly supported on \(FA\), so it in fact lies in \(1_{FA} C_u^*(\Gamma) 1_{FA}\). The algebra \(\A_{(F, r, \P)}\) consists of operators supported on \(FA\) and that act as a single \((n_i \times n_i)\)-matrix on all the subspaces corresponding to sets in \(P_i\) for every \(i\). 
	
	We define a preorder on \(\Lambda\) by declaring that \((E, r, \P) \leq (F, s, \Q)\) if and only if \(\A_{(E, r, \P)} \subseteq \A_{(F, s, \Q)}\). We need to check that \(\Lambda\) is directed. For fixed \(E\) and \(r\), we have \((E, r, \P) \leq (E, r, \Q)\) if and only if \(\Q\) refines \(\P\). We claim that it suffices to show that for any index \((E, r, \P)\), any finite subset \(F\) containing \(E\) and \(s \geq r\), there is a partition \(\Q\) of the equivalence classes \(I^{FA}_s\) on \(FA\) such that \(\A_{(E, r, \P)} \subseteq \A_{(F, s, \Q)}\). Indeed, suppose that were established, and consider a pair of indices \((E_1, r_1, \P_1), (E_2, r_2, \P_2) \in \Lambda\). Put \(s = \max\{r_1, r_2\}\) and \(F = E_1 \cup E_2\), and use the claim to find partitions \(\Q_1\) and \(\Q_2\) of the equivalence classes of \(\sim_s\) on \(FA\) such that \(\A_{(E_1, r_1, \P_1)} \subseteq \A_{(F, s, \Q_1)}\) and \(\A_{(E_2, r_2, \P_2)} \subseteq \A_{(F, s, \Q_2)}\). Then, if \(\Q\) is a common refinement of \(\Q_1\) and \(\Q_2\), then \((F, s, \Q)\) is a common upper bound for \((E_1, r_1, \P_1)\) and \((E_2, r_2, \P_2)\). 
	
	To show the claim, consider one index \((E, r, \P)\), a finite subset \(F\) containing \(E\) and \(s \geq r\). Denote the elements of \(\P = \{P_1, \ldots, P_N\}\). For \(S \in I_s^{FA}\), let \(f_S \colon S \to \{1, \ldots, \abs{S}\}\) be as above, and define a set of subsets \(\P_S = \{P_{1, S}, \ldots, P_{N, S}\}\) of the set \(\{1, \ldots, \abs{S}\}\) by the property that \(k \in P_{i, S}\) if and only if \(k \in f_S(T)\) for some subset \(T \subseteq S\) such that \(T \in P_i\). We define the partition \(\Q\) of \(I_s^{FA}\) by saying that two sets \(S, T \in I_s^{FA}\) are in the same \(\Q\)-cell if and only if they have the same cardinality and \(P_{i, S} = P_{i, T}\) for \(i = 1, \ldots, N\). Now, somewhat informally, any operator in \(\A_{(E, r, \P)}\) is block-diagonal with respect to \(I_r^{EA}\) and hence also with respect to \(I_s^{FA}\), and since it is constant on \(\P\)-colors, it is on \(\Q\)-colors as well because of the way \(\Q\) is defined. Hence, \((E, r, \P) \leq (F, s, \Q)\). 
	
	It only remains to show that the union of all \(\A_{(E, r, \P)}\) over \((E, r, \P) \in \Lambda\) is dense in \(\I_A\). Since the argument in \cite{li2018low} already shows that \(\bigcup_{r, \P} \A_{(E, r, \P)}\) is dense in \(1_{EA} C_u^*(\Gamma) 1_{EA}\) for every finite subset \(E \subseteq \Gamma\), it suffices to show that the union of \(1_{EA} C_u^*(\Gamma) 1_{EA}\) over all finite subsets \(E \subseteq \Gamma\) is dense in \(\I_A\). To that end, it suffices to show that for every \(f \in C_c(X_A)\) and \(g \in \Gamma\), \[fu_g \in  \overline{ \bigcup_{F \Subset  \Gamma} 1_{FA} C_u^*(\Gamma) 1_{FA} },\] where \(F\) runs through all finite subsets of \(\Gamma\). To see that such is the case, consider the support of \(f\), which is a compact subset of \(X_A = \bigcup_{h \in \Gamma} K_{hA}\). Hence, there is some finite subset \(E \subseteq \Gamma\) such that \(\bigcup_{h \in E} K_{hA}\) contains the support of $f$. Let \(S = \bigcup_{h \in E} (hA \cup g^{-1}hA)\). Then \[1_S f u_g 1_S = 1_S f 1_{gS} u_g = f u_g \in 1_S C^*_u(\Gamma) 1_S \subseteq \overline{ \bigcup_{F \Subset \Gamma} 1_{FA} C_u^*(\Gamma) 1_{FA}},\] which is what we needed to show. 
	\end{proof}
	
	Notice that the last part of the argument above combined with \cite[Theorem 2.2]{li2018low} shows that \(\I_A\) is a direct limit of AF-algebras, which implies that it is LF. However, to see that it is in fact AF, we need the modification of their argument presented above. Now, it follows from the theorem above that the largest LF-ideal in \(C_u^*(\Gamma)\) is the closure of the union of \(\I_A\), where \(A\) runs through all asymptotic dimension zero subsets of \(\Gamma\). Of course, we already know by \cref{thm_maxAF} that in the case \(\Gamma = \Z\), this ideal is in fact AF. This will follow for the general case by appending a fourth factor to the indexing set in the proof above, namely the asymptotic dimension zero set \(A\). 
	
	Following up on the discussion in the previous section regarding the largest type I-ideal, it appears to be an interesting problem to characterize for which subsets \(A\) the ideal \(\I_A\) is type I. The related problem of characterizing when the orbit space of the uniform Roe groupoid is \(T_0\) is left open in \cite[Problem 2.26]{braga2026}. Note that in \cite[Theorem 2.25]{braga2026}, it is shown that if \(X\) has asymptotic dimension at least one, then the orbit space of the uniform Roe groupoid of \(X\) is not \(T_0\), which corresponds to the fact shown here that the largest type I ideal is AF. We offer some more insight towards these questions by identifying a property that guarantees that \(\I_A\) is type I and by providing an example of a subset \(A\) of the integers such that \(\I_A\) is AF, but not type I. Notice that if \(A\) is any non-empty finite subset of \(\Gamma\), then \(\I_A\) is \(\K(\ell^2(\Gamma))\) in the standard representation on \(\ell^2(\Gamma)\), which is the smallest ideal in \(C_u^*(\Gamma)\). We simply denote it by \(\K\) in the sequel. 
	
	\begin{definition}[{\cite{geller2026sparse}}]
		Let \((X, d)\) be a metric space. Fix a point \(x_0 \in X\) and consider the function \(s \colon (0, \infty) \to \R\) given by 
		\[s(r) = \inf \{d(x, y) : x, y \in X \setminus B_X(x_0, r), x \neq y\}.\]
		The metric space \(X\) is called \textit{sparse} if \(\lim_{r \to \infty} s(r) = \infty.\)
	\end{definition}
	
	\begin{remark}
		Clearly, the notion of sparseness is independent of the choice of \(x_0\) in the definition above. 
	\end{remark}
	
	We show in the following proposition that for subsets of discrete groups this notion coincides with a property studied used in \cite{kellerhals} to find abelian projections. In the subsequent proposition, we push this idea further to obtain a family of type I ideals in the uniform Roe algebra of the group. 
	
	\begin{lemma}
		Let \(\Gamma\) be a countable group and consider a subset \(A \subseteq \Gamma\). Then \(A\) is sparse as a metric space if and only if \(A \cap g A\) is finite for every \(g \neq e\). 
	\end{lemma}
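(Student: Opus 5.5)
Both directions come from the two basic properties of the fixed proper metric \(d\) on \(\Gamma\) (right or left invariant; I fix left invariance, \(d(gx,gy)=d(x,y)\), and the other convention is symmetric after replacing \(gA\) by \(Ag\)): balls are finite, and for \(x\in A\cap gA\), writing \(x=ga\) with \(a \in A\), we have \(d(x,a)=d(ga,a)\). Without invariance the second identity fails, so I fix the convention explicitly. Properness and invariance together give, for each \(R\), the finite set \(F_R=\{g : d(g,e)\le R\}\), and uniformly \(\#\{g: d(ga,a)\le R\} = \#\{g : d(a^{-1}ga,e)\le R\}\), which is finite for each \(a\), though not necessarily uniformly in \(a\). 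I would therefore phrase everything via the pair \((a,x)\) with \(x=ga\), i.e.\ \(g=xa^{-1}\).

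\emph{Sparse \(\Rightarrow\) finite intersections.} Fix \(g\neq e\) and suppose \(A\cap gA\) is infinite. Then there are infinitely many distinct \(a\in A\) with \(ga\in A\), and \(ga\neq a\). With a left-invariant metric \(d(ga,a)\) need not be constant, so I would instead use the right-invariant metric here, for which \(d(ga,a)=d(g,e)=:c>0\) is constant. By the remark that \(A\) is infinite and balls are finite, these \(a\) eventually leave every ball \(B(x_0,r)\). Since \(d(a,ga)\le d(a,x_0)+c\), the points \(ga\) also leave every ball, so \(s(r)\le c\) for all \(r\). This contradicts sparseness. This dictates the convention: I would use the right-invariant metric throughout, namely \(d(x,y)=|xy^{-1}|\), so that \(d(ga,a)=|g|\), consistent with the earlier choice of ``right (or left)'' invariant metric.

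\emph{Finite intersections \(\Rightarrow\) sparse.} Fix \(R>0\). The set \(F_R=\{g\neq e : |g|\le R\}\) is finite. For each \(g\in F_R\), the set \(A\cap gA\) is finite, so \(\bigcup_{g\in F_R}(A\cap gA)\) is finite and contained in some ball \(B(x_0,r_0)\). Now take distinct \(x,y\in A\setminus B(x_0,r_0)\) with \(d(x,y)\le R\). Put \(g=xy^{-1}\). Then \(g\neq e\) and \(|g|=d(x,y)\le R\), so \(g \in F_R\), and \(x=gy\in A\cap gA\). This contradicts \(x\notin B(x_0,r_0)\). Hence \(s(r)>R\) for \(r\ge r_0\), and therefore \(s(r)\to\infty\).

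The main obstacle is purely the bookkeeping of the invariance side. It must match \(gA\) being a left translate, so the metric must be right-invariant, which yields \(d(gy,y)=|g|\). I would state at the start that the metric is the right-invariant proper metric and that the notion of sparseness is unchanged under coarse equivalence. The latter point is not automatic in general, so I would rely only on this specific metric, which is the convention for \(gA\) used in the paper. I would also note the degenerate case: if \(A\) is finite, both sides hold trivially.
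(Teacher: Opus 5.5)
Your proof is correct and follows the paper's argument: with the right-invariant metric, \(d(gx,x)=d(g,e)\), so pairs of distinct points of \(A\) at distance at most \(R\) come from the finitely many \(g\neq e\) with \(d(g,e)\le R\), and each such \(g\) contributes only the finitely many points of \(A\cap gA\). You also usefully make explicit the right-invariance convention that the paper uses tacitly, though in the first direction the inequality \(d(a,ga)\le d(a,x_0)+c\) should read \(d(ga,x_0)\ge d(a,x_0)-c\) (or simply use injectivity of \(a\mapsto ga\)).
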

	
	\begin{proof}
		First suppose that there is some \(g \neq e\) such that \(A \cap gA\) is infinite and put \(c = d(g, e) > 0\). Then for every \(r > 0\), since \(B_A(e, r)\) is finite and \(A \cap gA\) is infinite, \(A \setminus B_A(e, r)\) contains infinitely many pairs of points at distance \(c\). Hence, \(s(r) \leq c\) for every \(r\) and so \(A\) is not sparse. Conversely, suppose that \(A\) satisfies that \(A \cap gA\) is finite for every \(g \neq e\) and consider \(m > 0\). Any pair of points in \(A\) that are closer than \(m\) is of the form \(x, gx\) for some \(g \in \Gamma\) with \(d(g, e) \leq m\). Since \(\Gamma\) (and hence \(A\)) is uniformly locally finite, there are finitely many such \(g\). Moreover, by assumption, for every such \(g\), there are only finitely many pairs of elements in \(A\) of elements where one is obtained from the other by left multiplication by \(g\). Thus, there is an \(r > 0\) such that all distinct elements of \(A \setminus B_A(e, R)\) are at least \(m\) apart. Hence, \(s(r) \geq m\). Since \(m\) was arbitrary, we conclude that \(A\) is sparse. 
	\end{proof}
	
	Recall that a C*-algebra is \textit{type \(I_0\)} if it is generated as a C*-algebra by abelian elements. 
	
	\begin{proposition}
		If \(A \subseteq \Gamma\) is a finite union of sparse subsets of a countable group, then \(\I_A/ \K\) is type \(I_0\). In particular, \(\I_A\) is type I. 
	\end{proposition}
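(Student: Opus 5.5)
The plan is to first reduce to a single sparse set, then show that in the quotient \(\I_A/\K\) the image of \(1_A\) and all its translates are abelian projections, and finally check that these projections, together with the unitaries \(u_g\), produce a generating family of abelian elements.

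\emph{Reduction.} Write \(A = A_1 \cup \cdots \cup A_n\) with each \(A_j\) sparse. By \cref{prop_compact}(ii), \(\I_A = \I_{A_1} + \cdots + \I_{A_n}\), and hence \(\I_A/\K = \sum_j \I_{A_j}/\K\). A sum of C*-subalgebras, each generated by abelian elements, is again generated by abelian elements, provided abelianness is taken in the ambient algebra. Abelianness of an element of an ideal is the same whether measured in the ideal or in the whole algebra, since hereditary subalgebras of ideals are hereditary in the algebra. So it suffices to treat a single sparse \(A\), and to show that \(\I_A/\K\) is generated by elements that are abelian in \(C_u^*(\Gamma)/\K\).

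\emph{Abelian projections.} By the preceding lemma, \(A \cap gA\) is finite for every \(g \neq e\). Hence
\[1_A (f u_g) 1_A = 1_A f 1_{gA} u_g = f 1_{A \cap gA} u_g\]
is a finite-rank operator for \(g \neq e\) and \(f \in \ell^\infty(\Gamma)\). Since finite sums \(\sum_g f_g u_g\) are dense in \(C_u^*(\Gamma)\), we get
\[1_A C_u^*(\Gamma) 1_A \subseteq \ell^\infty(A) + \K.\]
Thus the corner of \(C_u^*(\Gamma)/\K\) cut down by the image \(p\) of \(1_A\) is the commutative algebra \(\ell^\infty(A)/c_0(A)\), i.e.\ \(p\) is an abelian projection. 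Translates \(hA\) are again sparse: \(hA \cap ghA = h(A \cap h^{-1}ghA)\) is finite for \(g \neq e\). So the images \(p_h\) of \(1_{hA}\) are abelian as well, and so are all subprojections \(1_B\) with \(B \subseteq hA\).

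\emph{Generation.} The ideal \(\I_A\) is densely spanned by elements \(f u_g\) with \(f \in C_c(X_A)\). Such an \(f\) is supported in \(\bigcup_{h \in E} K_{hA}\) for some finite \(E \subseteq \Gamma\). Using disjointification \(1_{h_1A}, 1_{h_2A \setminus h_1A}, \ldots\), we can write \(f = \sum_{h \in E} f q_h\) with \(q_h\) a diagonal projection below \(1_{hA}\). For \(x = f q_h u_g\) we have \(xx^* = f q_h \bar f \in q_h \ell^\infty(\Gamma) q_h\). The hereditary subalgebra generated by \(x\) is \(\overline{x x^* C_u^* x x^*}\), which sits inside the image of \(q_h C_u^*(\Gamma) q_h\). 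That corner is commutative modulo \(\K\) because \(q_h \le 1_{hA}\). Hence each such \(x\) is abelian in the quotient, and these elements span a dense subset of \(\I_A/\K\). So \(\I_A/\K\) is type \(I_0\), in particular type I. Since \(\K\) is type I and extensions of type I algebras are type I, it follows that \(\I_A\) is type I.

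The main obstacle is the inclusion \(1_A C_u^*(\Gamma) 1_A \subseteq \ell^\infty(A)+\K\) at the level of the norm closure. One must check that compressing a norm-convergent approximation by finite-propagation operators keeps the off-diagonal part in \(\K\). This works because each finite-propagation operator involves only finitely many \(g\), and \(\K\) is closed. The remaining steps are bookkeeping.
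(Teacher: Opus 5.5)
Your proposal is correct and follows essentially the paper's route. You reduce to a single sparse set via \cref{prop_compact}(ii), use finiteness of \(A \cap gA\) for \(g \neq e\) to see that compressions to translates of \(A\) are diagonal modulo \(\K\), and generate \(\I_A/\K\) by images of elements \(1_C u_g\) (your \(f q_h u_g\)) lying under a single translate \(hA\). You are also slightly more careful than the paper: you make the norm-closure step \(1_A C_u^*(\Gamma) 1_A \subseteq \ell^\infty(A) + \K\) and the disjointification of elements of \(C_c(X_A)\) explicit, with ``abelian'' for non-positive \(x\) understood, as in the paper, via \(xx^*\) or \(x^*x\).
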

	
	\begin{proof} 
		Clearly the last statement follows from the first as an extension of two type I C*-algebras is again type I. Let \(\pi \colon \I_A \to \I_A/\K\) denote the quotient map. It suffices to assume that \(A\) is sparse and show that for any pair of elements \(g,h \in \Gamma\), finite subset \(F \subseteq \Gamma\) and corresponding \(f_k \in C_0(X_A)\) for \(k \in F\), we have 
		\begin{equation}
			\label{eq_sparse}
		\pi \left( (1_{hA}u_g)^* \left( \sum_{k \in F} f_k u_k \right) (1_{hA}u_g) \right) \in \pi(C_0(X_A))
		\end{equation}
		Indeed, that would show that the element \(1_{hA}u_g\) is abelian modulo \(\K\). Moreover, if \(C\) is any compact-open set contained in \(hA\), the hereditary C*-subalgebra generated by \(1_Cu_g\) is contained in the one generated by \(1_A u_g\), and so we conclude that for all such \(C\), the element \(1_Cu_g\) is abelian modulo \(\K\). Letting these run through all compact-open sets of \(X_A\) and all \(g \in \Gamma\), we conclude that \(\I_A\) is generated as a C*-algebra by elements that are abelian modulo \(\K\), assuming that \(A\) is sparse. If \(A\) is a finite union of sparse sets, we combine the previous case with \cref{prop_compact}.
		
		To verify \cref{eq_sparse}, we expand the brackets in the element inside \(\pi\): 
		\begin{align*}
			(1_{hA}u_g)^* \left( \sum_{k \in F} f_k u_k \right) (1_{hA}u_g) &= \sum_{k \in F} u_g^* 1_{hA} f_k u_k 1_{hA} u_g \\
			&= \sum_{k \in F} u_g^* 1_{hA} f_k 1_{khA} u_{kg} \\
			&= \sum_{k \in F} 1_{g^{-1} hA} \alpha_g^{-1}(f_k) 1_{g^{-1}khA} u_{g^{-1}k g} \\
			&= \sum_{k \in F} 1_{g^{-1}h(A \cap h^{-1}khA)} \alpha_g^{-1}(f_k) u_{g^{-1}kg}
		\end{align*}
		By assumption, \(A \cap h^{-1}khA\) is finite unless \(k = e\). Hence, 
		\[	\pi \left( (1_{hA}u_g)^* \left( \sum_{k \in F} f_k u_k \right) (1_{hA}u_g) \right) = \pi(1_{g^{-1}hA} \alpha_g^{-1}(f_e) u_e) \in \pi(C_0(X_A)),\]
		which completes the proof. 
	\end{proof}
	
	 Although it is well-known that direct limits of type I C*-algebras need not be type I, the closure of an increasing directed union of ideals of type I is still type I. We could not find a reference to this fact in the literature, so we record it with a proof below for completeness. Clearly, a union of two sets that are finite unions of sparse sets is still a finite union of sparse sets, and thus the family of subsets of \(\Gamma\) that are finite unions of sparse sets, is directed by inclusion. Hence, the closed union of compact ideals associated to such subsets of \(\Gamma\) is an ideal in \(C_u^*(\Gamma)\) of type I. It remains open if this ideal is the largest type I ideal in general. 
	 
	 \begin{proposition}
	 	Let \(\A\) be a C*-algebra and let \((\I_\lambda)_{\lambda \in \Lambda}\) be a directed family of type I ideals in \(\A\). Then the ideal \[\I := \overline{\bigcup_{\lambda \in \Lambda} \I_\lambda}\] is of type I. 
	 \end{proposition}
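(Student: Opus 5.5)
The plan is to use the characterization of type I that the paper already recalls: a C*-algebra is type I if and only if every irreducible representation contains the compacts. Equivalently, we can use the largest type I ideal \(\sC(\A)\) from \cite[Proposition 6.2.7]{pedersen_blue}. The cleanest route is the second. Each \(\I_\lambda\) is a type I ideal of \(\A\), hence of \(\I\). Since \(\sC(\I)\) is the largest type I ideal of \(\I\), we get \(\I_\lambda \subseteq \sC(\I)\) for every \(\lambda\). As \(\sC(\I)\) is closed, it follows that \(\I = \overline{\bigcup_\lambda \I_\lambda} \subseteq \sC(\I)\), so \(\I = \sC(\I)\) is type I. The entire content therefore rests on the existence of a largest type I ideal, which is exactly the cited proposition. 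Note that directedness is not even needed here, only that the \(\I_\lambda\) are ideals.

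For a more self-contained argument, I would argue directly via irreducible representations. Let \(\pi \colon \I \to \B(H)\) be irreducible. Since \(\bigcup_\lambda \I_\lambda\) is dense in \(\I\) and \(\pi \neq 0\), there is some \(\lambda\) with \(\pi|_{\I_\lambda} \neq 0\). Since \(\I_\lambda\) is an ideal in \(\I\), the restriction \(\pi|_{\I_\lambda}\) is again irreducible: a non-zero representation of an ideal restricted from an irreducible one is irreducible, because the commutant is unchanged by the non-degeneracy of \(\pi(\I_\lambda)H = H\), using irreducibility of \(\pi\). As \(\I_\lambda\) is type I, \(\pi(\I_\lambda) \supseteq \K(H)\), and hence \(\pi(\I) \supseteq \K(H)\). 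Thus every irreducible representation of \(\I\) contains the compacts, which is the definition of type I recalled in the paper.

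The only step requiring care is the restriction claim. Here I would note that \(\overline{\pi(\I_\lambda)H}\) is a non-zero \(\pi(\I)\)-invariant closed subspace, hence equal to \(H\). Any operator commuting with \(\pi(\I_\lambda)\) then commutes with \(\pi(\I)\), since \(\pi(a)\pi(b)\xi\) with \(b \in \I_\lambda\) lies in \(\pi(\I_\lambda)H\) because \(ab \in \I_\lambda\). Schur's lemma then finishes the argument. I expect no real obstacle beyond this standard verification.
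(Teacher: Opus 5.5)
Your argument is correct, and both of your routes differ from the paper's.

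The paper uses the quotient characterization of type I. For a proper ideal \(\J \subsetneq \I\) it picks \(\lambda\) with \(\I_\lambda \not\subseteq \J\). It then takes a non-zero abelian element of \(\I_\lambda/(\I_\lambda \cap \J) \cong (\I_\lambda + \J)/\J\). Since this is an ideal of \(\I/\J\), the element remains abelian there, because hereditary subalgebras of an ideal are hereditary in the ambient algebra.

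Your second route is the representation-theoretic dual of this:
\begin{enumerate}
\item quotients of \(\I\) become irreducible representations;
\item the choice \(\I_\lambda \not\subseteq \J\) becomes \(\pi|_{\I_\lambda} \neq 0\);
\item the lemma on abelian elements in ideals is replaced by the fact, which you verify correctly, that an irreducible representation restricts to an irreducible, non-degenerate representation of any ideal it does not annihilate.
\end{enumerate}
This matches the definition of type I that the paper states first. The paper's version instead stays inside the C*-algebras and avoids representation theory altogether.

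Your first route is shorter still. It shows the proposition is a one-line corollary of \cite[Proposition 6.2.7]{pedersen_blue}, which the paper itself cites a few paragraphs earlier. You could equally argue \(\I \subseteq \sC(\A)\) and use that ideals of type I algebras are type I. This partly answers the paper's remark that no reference could be found. The price is that it is only as elementary as the citation. Because the existence of a largest type I ideal implies the proposition in one line, any proof of that existence result already contains the proposition's content. So this route moves the work into the citation rather than removing it.

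One small correction: directedness is used, namely to ensure that \(\overline{\bigcup_\lambda \I_\lambda}\) is a closed ideal at all. For two incomparable ideals the union is closed but not even a linear subspace. What is true is that none of the arguments, yours or the paper's, needs directedness once the closed union is replaced by the closed ideal generated by the \(\I_\lambda\).
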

	 
	 \begin{proof}
	 	Consider a proper ideal \(\J \subseteq \I\). Since \(\J\) is proper, there must be some \(\lambda\) such that \(\I_{\lambda}\) is not contained in \(\J\). Hence, \(\I_{\lambda}/J \cap \I_{\lambda}\) contains a non-zero abelian element \(x\). Moreover,  \(\I_{\lambda}/\J \cap \I_{\lambda}\) sits as an ideal in \(\I/\J\) and so \(x\) is an abelian element in \(\I/\J\). 
	 \end{proof}
	
	 We saw in \cref{prop_inclusion} that if \(\Gamma\) is an exact group, then the largest type I ideal in \(C_u^*(\Gamma)\) is contained in the largest AF-ideal. If \(\Gamma\) is finite, the converse inclusion also holds since the uniform Roe algebra itself is both AF and type I in this case. If \(\Gamma\) is an infinite locally finite group, the inclusion is proper since \(C_u^*(\Gamma)\) is AF, but cannot be type I since the simple quotients are unital and infinite-dimensional (by freeness of the action of a group on its Stone--\v{C}ech compactification). We will see that equality does not hold for finitely generated groups in general since it fails even for the integers. We first formulate a geometric property that guarantees the existence of an embedding of the CAR-algebra into a uniform Roe algebra, before providing an example of a subset of the integers of asymptotic dimension zero with said property. 
	
	\begin{proposition}
		\label{prop_CAR}
		Suppose that \(X\) is a uniformly locally finite space admitting a sequence \((\Pi_n)_{n=1}^\infty\) of partitions such that for every \(n \geq 1\), we have 
		\begin{enumerate}
			\item \(\Pi_{n}\) is a refinement of \(\Pi_{n+1}\); 
			\item \(\abs{P} = 2^n\) for every \(P \in \Pi_n\); 
			\item \(\sup \{ \operatorname{diam}(P) : P \in \Pi_n\}< \infty\).  
		\end{enumerate}
		Then \(C_u^*(X)\) contains an embedded copy of the CAR-algebra. 
	\end{proposition}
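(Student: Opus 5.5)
The plan is to build an increasing sequence of unital embeddings \(M_{2^n}(\C) \hookrightarrow C_u^*(X)\) that are compatible with the standard unital embeddings \(M_{2^n}(\C) \to M_{2^{n+1}}(\C)\), \(a \mapsto a \otimes 1_2\). The closure of the union of their images is then a copy of the CAR-algebra \(M_{2^\infty}\). The construction follows the spirit of the maps \(\varphi_{(F,r,\P)}\) in the proof of \cref{thm_AF_ideals}. On each cell \(P\in\Pi_n\) we fix a bijection \(\sigma_P\colon \{0,1\}^n \to P\). Given a matrix \(a\in M_{2^n}(\C)\), indexed by \(\{0,1\}^n\), we let it act on \(\ell^2(P)\) via the unitary induced by \(\sigma_P\), and we take the block-diagonal direct sum over all \(P\in\Pi_n\). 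This gives an operator \(\varphi_n(a)\) on \(\ell^2(X)\).

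First we check that \(\varphi_n(a)\in C_u^*(X)\). By condition (iii), the operator \(\varphi_n(a)\) has propagation at most \(\sup_{P\in\Pi_n}\operatorname{diam}(P)<\infty\). Its matrix coefficients are entries of \(a\), so they are uniformly bounded. It is block-diagonal with identical finite blocks, so \(\norm{\varphi_n(a)}=\norm{a}\). Hence \(\varphi_n\) is an injective unital \(*\)-homomorphism into \(C_u^*(X)\); the same holds if we use \cite[Lemma 8.1]{winter}.

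The key step is compatibility, and this is where the bijections must be chosen coherently. By conditions (i) and (ii), each \(Q\in\Pi_{n+1}\) is a union of exactly two cells \(P_0,P_1\in\Pi_n\), each of size \(2^n\). We define the bijections recursively. Start with arbitrary \(\sigma_P\) for \(P\in\Pi_1\). Given \(\sigma_P\) for all \(P\in\Pi_n\), for each \(Q\in\Pi_{n+1}\) we choose an ordering \(P_0,P_1\) of its two subcells and set \(\sigma_Q(w,\epsilon)=\sigma_{P_\epsilon}(w)\) for \(w\in\{0,1\}^n\) and \(\epsilon\in\{0,1\}\). With respect to these identifications, \(\ell^2(Q)\cong \ell^2(\{0,1\}^n)\otimes\C^2\). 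The block of \(\varphi_n(a)\) on \(Q\) is \(a\) acting on each of \(\ell^2(P_0)\) and \(\ell^2(P_1)\), which under the identification is exactly \(a\otimes 1_2\). Therefore \(\varphi_{n+1}(a\otimes 1_2)=\varphi_n(a)\). Only countably many choices are involved, since \(X\) is countable, so the recursion presents no difficulty.

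Putting this together, the images \(\varphi_n(M_{2^n}(\C))\) form an increasing sequence of unital subalgebras. The union of these images is a \(*\)-algebra isomorphic to the algebraic inductive limit \(\bigcup_n M_{2^n}(\C)\), and the combined map is isometric because each \(\varphi_n\) is. Its closure is therefore isomorphic to \(\varinjlim (M_{2^n}(\C), a\mapsto a\otimes 1_2)\), which is the CAR-algebra. I expect the main obstacle to be bookkeeping rather than analysis: we must make sure the recursive bijections realise the connecting map as \(a\otimes 1_2\) and not some other unital embedding. Any unital embedding of \(M_{2^n}(\C)\) into \(M_{2^{n+1}}(\C)\) is unitarily equivalent to \(a\mapsto a\otimes 1_2\), so the limit is the CAR-algebra regardless. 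The explicit coherent choice above, however, avoids even that argument.
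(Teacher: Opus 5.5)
Your proof is correct and is essentially the paper's argument. In both, block-diagonal copies of \(M_{2^n}(\C)\) over the cells of \(\Pi_n\) have finite propagation by (iii) and are nested by (i), and the CAR-algebra is the closure of the increasing union. Your recursive, tree-compatible choice of the \(\sigma_P\) is more careful than the paper, whose order-isomorphisms come from an arbitrary total order on \(X\) and need not give \(\A_n \subseteq \A_{n+1}\): if the two halves of one cell of \(\Pi_{n+1}\) are interleaved in the order while those of another are not, \(\varphi_n(a)\) corresponds to different matrices on the two cells. So coherence is what secures the containment itself, not merely the form \(a \mapsto a \otimes 1_2\) of the connecting map, as your closing remark suggests.
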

	
	\begin{proof}
		We consider the standard representation of \(C_u^*(X)\) on \(\ell^2(X)\). Fix a total order on \(X\), and for any finite subset \(P \subseteq X\), let \(f_P \colon P \to \{1, \ldots, \abs{P}\}\) denote the corresponding order-isomorphism, as in the proof of \cref{thm_AF_ideals}. For each \(n \geq 1\), this specifies an isomorphism \(\varphi_P \colon M_{2^n}(\C) \to \B(\ell^2(P))\) and we consider the embedding \[\varphi_n := \prod_{P \in \Pi_n} \varphi_P \colon M_{2^n}(\C) \to \B(\ell^2(X)),\]
		whose image, denote it by \(\A_n\), is contained in \(C_u^*(X)\) by assumption (iii). By assumption (i), we have \(\A_n \subseteq \A_{n+1}\) unitally. This provides an increasing sequence of C*-subalgebras of \(C_u^*(X)\), whose closed union is isomorphic to the CAR-algebra. 
	\end{proof}
	
	\begin{example}
		\label{ex_typeI_notAF}
		We construct a subset \(A \subseteq \Z\) of asymptotic dimension zero satisfying the conditions of \cref{prop_CAR}. By \cref{thm_AF_ideals}, the corresponding ideal \(\I_A\) is AF, and by \cref{prop_CAR}, \(\I_A\) contains the CAR-algebra since \[C_u^*(A) \cong 1_A C_u^*(\Z) 1_A \subseteq \I_A.\]Thus \(\I_A\) is AF but not type I, and we conclude that the largest type I-ideal in \(C_u^*(\Z)\) is properly contained in the largest AF-ideal. 
		
		For this construction, we will use the 2-adic valuation function \(\nu_2 \colon \Z \setminus \{0\} \to \N\) that assigns to an integer \(k\) the highest power \(\nu_2(k)\) such that \(2^{\nu_2(k)}\) divides \(k\). The only property of the 2-adic valuation we will need is that for any \(x, y\), we have \(\nu_2(x+y) \geq \min\{ \nu_2(x), \nu_2(y) \}\) with equality if \(\nu_2(x) \neq \nu_2(y)\).  
		
		Let \(A = \{x_0, x_1, x_2, \ldots \}\) be the subset of the integers obtained inductively as follows: Put \(x_0 = 1\) and let \(x_{n+1} = x_n + \nu_2(2(n+1))\) for \(n \geq 0\). The resulting set starts off as follows: 
		\[A = \{1, 2, 4, 5, 8, 9, 11, 12, 16, 17, \ldots \}.\]
		Inductively, define a sequence \((\Pi_n)_{n=1}^\infty\) of partitions of \(A\) by letting \(\Pi_1\) consist of consecutive pairs. For \(n \geq 1\), \(\Pi_{n+1}\) is obtained by joining neighboring members of \(\Pi_n\). Clearly, this sequence of partitions satisfies the first two conditions. To show that it also satisfies the last condition, we will show that \[\sup \{ \operatorname{diam}(P) : P \in \Pi_n \} \leq (2^n-1)n\] for every \(n \geq 1\). Indeed, consider the \(k\)th interval \(P \in \Pi_n\) (counted from 0). Then \(P\) consists of \(2^n\) elements separated by the values of \(\nu_2\) on some integral interval 
		\[ [2+k2^{n+1}, 2+k2^{n+1} + 2(2^n-2)].\] We claim that for any element \(x\) in that interval, we have \(\nu_2(x) \leq n\). Indeed, \(x\) may be written as \(x = 2(k2^n + r+1) \) for \(0 \leq r \leq 2^n-2\). Moreover, we have 
		\[\nu_2(k2^n) \geq n \quad \text{and} \quad \nu_2(r+1) < n.\] Hence, by the property mentioned above, we have \[\nu_2(x) = \nu_2(k2^n + r+1) + 1 = \min \{ \nu_2(k2^n), \nu_2(r+1) \} + 1 < n+1.\] We conclude that the diameter of \(P\) is upper bounded by \((2^n-1)n\). 
		
		Moreover, notice that since the \(k\)th interval above ends in 
		\[2+k2^{n+1}+2(2^n-2) = k2^{n+1} +2^{n+1}-2 < (k+1) 2^{n+1}\] and the \(k+1\)th interval starts at \(2+(k+1)2^{n+1}\), there is a gap between them of length \(\nu_2(2^{n+1}) = n+1\). Hence, \(\Pi_n\) is in fact \((n+1)\)-separated. This shows that \(A\) has asymptotic dimension zero. 
	\end{example}
	
	\begin{remark}
		Clearly, the number 2 may be replaced by any prime \(p\) in \cref{prop_CAR} and \cref{ex_typeI_notAF} to obtain an embedding of the UHF-algebra of type \(p^\infty\). 
	\end{remark}
	
	\printbibliography{}
	
\end{document}